\documentclass[11pt]{article}
\usepackage{graphicx} 
\usepackage{geometry}

\usepackage{amsmath}
\usepackage{amssymb}
\usepackage{amsthm} 
\usepackage{dsfont}
\usepackage{xcolor}
\newtheorem{theorem}{Theorem}[section] 
\newtheorem{lemma}[theorem]{Lemma} 
\newtheorem{definition}[theorem]{Definition}
\newtheorem{proposition}[theorem]{Proposition}
\newtheorem{corollary}[theorem]{Corollary}
\theoremstyle{remark}
\newtheorem{remark}[theorem]{Remark}
\newtheorem{example}[theorem]{Example}
\usepackage{hyperref}
\usepackage{authblk}
\usepackage{tikz}
\usetikzlibrary{shapes.geometric,angles,quotes}

\newcommand{\sech}{\mathrm{sech}}

\title{Inverse problems and continuity of heat content coefficients}

\author[1]{Ryoki Endo \thanks{\texttt{endo@m.sc.niigata-u.ac.jp}}}

\author[2]{Katie Gittins \thanks{\texttt{katie.gittins@durham.ac.uk}}}

\author[3]{Jacob Walker
\thanks{\texttt{jpw79@cam.ac.uk}}}

\affil[1]{\footnotesize Faculty of Science, Niigata University, 8050 Ikarashi 2-no-cho, Nishi-ku, Niigata 950-2181, Japan.}

\affil[2]{\footnotesize Department of Mathematical Sciences, Durham University, Mathematical Sciences and Computer Science Building, Upper Mountjoy Campus, Stockton Road, Durham University, DH1 3LE, United Kingdom.}

\affil[3]{\footnotesize Trinity College, Cambridge, CB2 1TQ, United Kingdom.}

\date{\today}

\begin{document}
\maketitle

\begin{abstract}
    We consider the small-time asymptotic expansions for the Dirichlet and Open heat contents of convex planar domains.
    We prove that the small-time asymptotic expansion for the Open heat content can determine a triangle among triangles, a parallelogram among parallelograms, and an acute trapezium among acute trapezia of the same height.
    In addition, for a sequence of convex polygons converging to a convex set with smooth boundary in the sense of Hausdorff, we show that the coefficients of order $t$ in the small-time asymptotic expansions of the Dirichlet and the Open heat contents converge to those of the limiting set.
    While, for a sequence of convex sets with smooth boundary converging to a convex polygon in the sense of Hausdorff, we show that the coefficients of order $t$ do not converge to that of the limiting polygon.
         \newline \newline
     \textbf{MSC 2020}: 35K05, 35K15, 35K20.
     \newline
     \textbf{Keywords}: Heat content, inverse problems, continuity of invariants.
\end{abstract}

\section{Introduction}

\subsection{The Dirichlet and Open Heat Contents}
We consider heat flow problems on bounded, convex, open sets in $\mathbb{R}^2$.
We impose initial temperature conditions and boundary conditions (e.g. Dirichlet) on such a set and then let the heat flow out of it. Our quantity of interest, the \emph{heat content} is the amount of heat left inside the set at a later time. We focus on two heat flow problems that give rise to the Dirichlet heat content and the Open heat content. For further properties of the heat content in the Euclidean setting, see for example \cite{vdB13, vdBGG20, vdBG15, vdBG25, FG23}.

We first define the Dirichlet heat content. Let $D \subset \mathbb{R}^2$ be a bounded, convex, open set in $\mathbb{R}^2$.
Let $\tilde{u}_D : D \times [0, \infty) \to \mathbb{R}$ be the unique solution of
\begin{equation*}
    \begin{cases}
        \Delta \tilde{u}(x,t) = \frac{\partial \tilde{u}(x,t)}{\partial t}, & x\in D, t>0,\\
        \lim_{t\downarrow 0}\tilde{u}(x,t) = 1, & x \in D,\\
        \lim_{x \to x_0}\tilde{u}(x,t) = 0, & x_0 \in \partial D, t>0.
    \end{cases}
\end{equation*}
Then the Dirichlet heat content of $D$ is defined as 
\begin{equation*}
    Q_D(t) := \int_D \tilde{u}_D(x,t) \, dx.
\end{equation*}

It was shown in \cite[Eq. (1.8)]{vdBGa94} that if $\Omega \subset \mathbb{R}^2$ is a bounded, convex, open set with $C^\infty$ boundary, then, as $t \downarrow 0$,
\begin{equation}\label{eq:Dir-smooth}
    Q_\Omega(t) = a_{0,Q}(\Omega) + a_{1/2,Q}(\Omega) t^{1/2} + a_{1,Q}(\Omega) t + O(t^{3/2}),
\end{equation}
with coefficients
\begin{equation}\label{eq:Dir-smooth-coeffs}
    a_{0,Q}(\Omega) = |\Omega|, \quad
    a_{1/2,Q}(\Omega) = - \frac{2L(\partial \Omega)}{\pi^{1/2}}, \quad
    a_{1,Q}(\Omega) = \pi,
\end{equation}
where $|\Omega|$ is the area of $\Omega$ and $L(\partial \Omega)$ is the length of the boundary of $\Omega$. 

Let $P \subset \mathbb{R}^2$ be a bounded, convex, open set with polygonal boundary that has interior angles $\gamma_1, \dots, \gamma_N$ where $\gamma_i \in (0,\pi)$.
It was shown in \cite[Theorem 1]{vdBS90} that, as $t \downarrow 0$,
\begin{equation}\label{eq:Dir-poly}
    Q_P(t) = a_{0,Q}(P) + a_{1/2,Q}(P) t^{1/2} + a_{1,Q}(P) t + O(e^{-q/t}),
\end{equation}
with coefficients
\begin{equation}\label{eq:Dir-poly-coeffs}
    a_{0,Q}(P) = |P|, \quad
    a_{1/2,Q}(P) = - \frac{2L(\partial P)}{\pi^{1/2}}, \quad
    a_{1,Q}(P) = \sum_{i=1}^N c(\gamma_i) ,
\end{equation}
where $c: (0, \pi] \to \mathbb{R}$ is defined as
\begin{equation}\label{eq:Dir-angle}
    c(\alpha) = \int_0^\infty \frac{4\sinh((\pi - \alpha)\theta)}{\sinh(\pi\theta)\cosh(\alpha\theta)}d\theta,
\end{equation}
and $q>0$ is a positive constant that depends on $P$.
We observe that $c(\pi) = 0$.
We note that analogous statements to \eqref{eq:Dir-smooth} and \eqref{eq:Dir-poly} are shown to hold in \cite{vdBGa94}, \cite{vdBS90} respectively without assuming the sets are convex. 

Next, we define the Open heat content. Let $D \subset \mathbb{R}^2$ be a bounded, convex, open set in $\mathbb{R}^2$.
Let $u_D : \mathbb{R}^2 \times [0, \infty) \to \mathbb{R}$ be the unique solution of
\begin{equation*}
    \begin{cases}
        \Delta u(x,t) = \frac{\partial u(x,t)}{\partial t}, & x\in \mathbb{R}^2, t>0,\\
        \lim_{t \downarrow 0} u_D(x,t) = \mathds{1}_D(x), & x \in \mathbb{R}^2 \setminus \partial D.
    \end{cases}
\end{equation*}
We note that this heat flow problem takes place in all of $\mathbb{R}^2$ and that no boundary condition has been imposed.
Then the Open heat content of $D$ is defined as 
\begin{equation*}
    H_D(t) := \int_D u_D(x,t) \, dx.
\end{equation*}

From \cite[Theorem 1.3 and Corollary 1.7]{vdBGy15}, we see that if $\Omega \subset \mathbb{R}^2$ is a bounded, convex, open set with smooth boundary, then, as $t \downarrow 0$, $H_\Omega(t)$ has a small-time asymptotic expansion in the same form as \eqref{eq:Dir-smooth}
with coefficients
\begin{equation}\label{eq:Open-smooth-coefficients}
    a_{0,H}(\Omega) = |\Omega|, \quad 
    a_{1/2,H}(\Omega) = - \frac{L(\partial \Omega)}{\pi^{1/2}}, \quad 
    a_{1,H}(\Omega) = 0.
\end{equation}

Let $P \subset \mathbb{R}^2$ be a bounded, convex, open set with polygonal boundary that has interior angles $\gamma_1, \dots, \gamma_N$ where $\gamma_i \in (0,\pi)$.
It was shown in \cite[Theorem 1]{vdBG16} that, as $t \downarrow 0$, $H_P(t)$ has a small-time asymptotic expansion in the same form as \eqref{eq:Dir-poly} with coefficients
\begin{equation}\label{eq:Open-poly-coeffs}
    a_{0,H}(P) = |P|, \quad 
    a_{1/2,H}(P) = - \frac{L(\partial P)}{\pi^{1/2}}, \quad
    a_{1,H}(P) = \sum_{i=1}^N g(\gamma_i),
\end{equation}
where $g: (0, \pi] \to \mathbb{R}$ is defined as
\begin{equation}\label{eq:Open-angle}
    g(\alpha) = \frac{1}{\pi} + \left(1- \frac{\alpha}{\pi}\right)\cot(\alpha).
\end{equation}
We observe that $g$ is non-negative and that $g(\pi)=0$ via l'Hôpital's rule.
We note that analogous small-time asymptotic expansions are shown to hold in \cite{vdBGy15}, \cite{vdBG16} without assuming the sets are convex.   

\subsection{Inverse problems for heat content}
Recall that the Dirichlet heat trace also enjoys analogous small-time asymptotic expansions in these settings.
Let $\Omega \subset \mathbb{R}^2$ be a bounded, convex, open set with smooth boundary and Dirichlet eigenvalues $\{\lambda_k(\Omega)\}_{k \in \mathbb{N}}$. Then, as $t \downarrow 0$,
\begin{equation}\label{eq:Trace-smooth}
    \sum_{k =1}^\infty e^{-t \lambda_k(\Omega)} = \frac{a_{-1,T}(\Omega)}{t} + \frac{a_{-1/2,T}(\Omega)}{t^{1/2}} + a_{0,T}(\Omega) + O(t^{1/2}),
\end{equation}
with coefficients
\begin{equation}\label{eq:Trace-smooth-coeffs}
    a_{-1,T}(\Omega) = \frac{|\Omega|}{4\pi}, \quad
    a_{-1/2,T}(\Omega) = - \frac{L(\partial \Omega)}{8\pi^{1/2}}, \quad 
    a_{0,T}(\Omega) = \frac{1}{6}.
\end{equation}
See, for example, \cite{MR25} and references therein. 
Let $P \subset \mathbb{R}^2$ be a bounded, convex, open set with polygonal boundary that has interior angles $\gamma_1, \dots, \gamma_N$ where $\gamma_i \in (0,\pi)$ and Dirichlet eigenvalues $\{\lambda_k\}_{k \in \mathbb{N}}$.
It was shown in \cite[Theorem 1]{vdBS88} that, as $t \downarrow 0$,
\begin{equation}\label{eq:Trace-poly}
    \sum_{k =1}^\infty e^{-t \lambda_k(P)} = \frac{a_{-1,T}(P)}{t} + \frac{a_{-1/2,T}(P)}{t^{1/2}} + a_{0,T}(P) + O(e^{-s/t}),
\end{equation}
with coefficients
\begin{equation}\label{eq:Trace-poly-coeffs}
    a_{-1,T}(P) = \frac{|P|}{4\pi}, \quad 
    a_{-1/2,T}(P) = - \frac{L(\partial P)}{8\pi^{1/2}}, \quad
    a_{0,T}(P) = \sum_{i=1}^N h(\gamma_i),
\end{equation}
where $h: (0, \pi] \to \mathbb{R}$ is defined as
\begin{equation}\label{eq:Trace-angle}
    h(\alpha) = \frac{\pi^2 - \alpha^2}{24 \pi \alpha}
\end{equation}
and $s>0$ is a positive constant that depends on $P$.

A classic question in spectral geometry is whether we can hear the shape of a drum, that is, whether knowing the eigenvalues of the Dirichlet Laplacian allows us to determine the underlying shape.
The small-time asymptotics of the heat trace give us a link between the Dirichlet eigenvalues of the set and some geometric quantities of the set and are a key tool in such inverse problems.
The question of whether the Dirichlet spectrum determines the heat content was posed in \cite{Gil09}, a construction for manifolds that are not isospectral but have the same heat content for all time was also provided.
Moreover, several examples of planar domains that are isospectral (either Dirichlet or mixed Dirichlet and Neumann) but do not have the same heat content for all time were constructed in \cite{vdBDK14}. 

Directly related to the inverse results of this paper, in \cite{GM13}, it was shown that the first three coefficients in the small-time asymptotic expansion of the heat trace uniquely determine a triangle within the collection of triangles.
Moreover, in \cite{LR15}, by employing the small-time asymptotic expansion of the heat trace, it was shown that if two parallelograms have the same Dirichlet spectrum, then they are congruent, and if two acute trapezia have the same Dirichlet spectrum, then they are congruent (see Definition \ref{def:acute} for the definition of an acute trapezium).

Regarding the Dirichlet heat content, in \cite{MM17}, it was shown that the first three coefficients in the small-time asymptotic expansion of the Dirichlet heat content uniquely determine a triangle within the collection of triangles.
Moreover, in \cite{B18}, it was shown that the first three coefficients in the small-time asymptotic expansion of the Dirichlet heat content uniquely determine parallelograms within the collection of parallelograms, and uniquely determine acute trapezia within the collection of acute trapezia. 

In this paper, we investigate analogous inverse problems for the Open heat content. Our result for triangles is as follows.

\begin{theorem}\label{thm:triangles}
    The first three coefficients in the small-time asymptotic expansion of the Open heat content uniquely determine a triangle within the collection of triangles.
\end{theorem}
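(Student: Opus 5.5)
The plan is to separate scale from shape. For a triangle $T$ with angles $\alpha,\beta,\gamma$, the coefficients \eqref{eq:Open-poly-coeffs} give $|T|$, $L(\partial T)$ and
\[
G(T)=g(\alpha)+g(\beta)+g(\gamma).
\]
The quantity $G$ depends only on the angles, and so does the scale-invariant ratio $L(\partial T)^2/|T|$. The classical identity
\[
\frac{L^2}{4|T|}=\cot\tfrac{\alpha}{2}\cot\tfrac{\beta}{2}\cot\tfrac{\gamma}{2}=\cot\tfrac{\alpha}{2}+\cot\tfrac{\beta}{2}+\cot\tfrac{\gamma}{2}
\]
holds since $\alpha+\beta+\gamma=\pi$. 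So the first three coefficients determine
\[
F=f(\alpha)+f(\beta)+f(\gamma),\qquad f(x)=\cot(x/2),
\]
together with $G$. It then suffices to show that the pair $(F,G)$ determines the unordered triple of angles. Once the angles are known, the area fixes the scale, so the triangle is determined up to congruence.

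To prove that $(F,G)$ determines the angles, I would work on the open simplex $\{\alpha+\beta+\gamma=\pi,\ \alpha,\beta,\gamma>0\}$ modulo permutations, i.e.\ on the closed fundamental chamber $\alpha\ge\beta\ge\gamma>0$. The function $F$ is strictly convex on the simplex, tends to $+\infty$ at the boundary, and is minimised at the equilateral triangle. Hence for $F>F_{\min}$ the level set is a smooth closed curve around the equilateral point. It meets the chamber in a single arc whose endpoints lie on the two isosceles walls $\alpha=\beta$ and $\beta=\gamma$. Injectivity of $(F,G)$ on the chamber then reduces to showing that $G$ is strictly monotone along each such arc. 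By a Lagrange multiplier argument, this holds provided $G$ has no critical point on the level curve of $F$ in the interior of the chamber. Such a critical point means the gradients of $1$, $F$, $G$ in $(\alpha,\beta,\gamma)$ are linearly dependent:
\[
\det\begin{pmatrix}1&1&1\\ f'(\alpha)&f'(\beta)&f'(\gamma)\\ g'(\alpha)&g'(\beta)&g'(\gamma)\end{pmatrix}=0 .
\]
So I must show this determinant is nonzero whenever the three angles are distinct.

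For this I would use the standard convexity trick. On $(0,\pi)$ we have $f'(x)=-\tfrac12\csc^2(x/2)$, which is strictly increasing because $f''(x)=\tfrac12\csc^2(x/2)\cot(x/2)>0$. Hence $u=f'(x)$ is a valid change of variable, and we can set $\varphi=g'\circ(f')^{-1}$. The determinant vanishes exactly when the three points $(u_i,\varphi(u_i))$ are collinear. This is impossible for distinct $u_i$ if $\varphi$ is strictly convex or strictly concave. Since $d^2\varphi/du^2=(g'''f''-g''f''')/(f'')^3$ and $f''>0$, the key step is to prove that
\[
g'''(x)f''(x)-g''(x)f'''(x)
\]
has constant sign on $(0,\pi)$, with $g(x)=\tfrac1\pi+(1-\tfrac x\pi)\cot x$.

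This sign verification is the main obstacle: it is an explicit but messy trigonometric inequality. I would first work out the asymptotics as $x\downarrow0$ and $x\uparrow\pi$ to identify the sign. Then I would clear the positive factors $\csc^k$ and reduce to a polynomial-type inequality in $x$, $\sin x$, $\cos x$. I would try to prove that inequality by Taylor bounds on subintervals, or, failing that, by a rigorous interval-arithmetic check on a compact subinterval combined with the asymptotics at the ends.

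If strict convexity of $\varphi$ fails somewhere, I would instead compute $G$ directly along the level arcs. For example, I would parametrise by $\gamma$ on the isosceles walls and in between, and argue monotonicity there. The final step is to check that the two isosceles endpoints of each arc have different values of $G$; this follows automatically from strict monotonicity of $G$ along the arc.
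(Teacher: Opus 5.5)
Your architecture is correct and is essentially the paper's, with the roles of the two functions swapped. The paper takes level sets of the strictly convex $G=\sum g$ and shows $W=\sum\cot(\cdot/2)$ is monotone along them; you take level sets of $F=W$ and show $G$ is monotone along them. In both versions Lagrange multipliers reduce everything to your determinant being nonzero at non-isosceles points.

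Your collinearity formulation via $\varphi=g'\circ(f')^{-1}$ repackages the paper's Rolle argument more tidily. It absorbs the paper's separate cases $R=0$, $S=0$ and $RS>0$. Since $\varphi'(u)=g''(x)/f''(x)$, strict convexity of $\varphi$ is exactly the statement that $M:=g''/f''$ is strictly increasing on $(0,\pi)$. That is precisely the paper's key lemma.

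The gap is that you never establish this inequality. You call it the main obstacle, propose asymptotics, Taylor bounds or interval arithmetic, and even allow for it failing. As written, the proof is therefore incomplete at its only nontrivial step. It closes cleanly if you work with the ratio instead of expanding $g'''f''-g''f'''$. Use
$g''(x)=\frac{2}{\pi\sin^2x}\bigl(1+(\pi-x)\cot x\bigr)$ and $f''(x)=\frac{\cos(x/2)}{2\sin^3(x/2)}$. Then
\[
M(x)=\frac{\sin x+(\pi-x)\cos x}{2\pi\cos^4(x/2)},\qquad
M'(x)=\frac{\sin^2(x/2)}{\pi\cos^5(x/2)}\Bigl(2\cos\tfrac{x}{2}-(\pi-x)\sin\tfrac{x}{2}\Bigr).
\]
Call the bracket $\Lambda(x)$. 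It satisfies $\Lambda'(x)=-\tfrac{\pi-x}{2}\cos\tfrac{x}{2}<0$ and $\Lambda(\pi)=0$, so $\Lambda>0$ on $(0,\pi)$. Hence $g'''f''-g''f'''=(f'')^2M'>0$, so $\varphi$ is strictly convex. No endpoint asymptotics or numerical checks are needed, and your fallback plan is unnecessary. With this inserted, your argument is complete.
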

\noindent
In addition, we have the corresponding result for parallelograms.
\begin{theorem}\label{thm:parallel}
    The first three coefficients in the small-time asymptotic expansion of the Open heat content uniquely determine a parallelogram within the collection of parallelograms.
\end{theorem}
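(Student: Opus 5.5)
A parallelogram $P$ with interior angles $\alpha, \pi-\alpha, \alpha, \pi-\alpha$, where $\alpha \in (0,\pi/2]$, and side lengths $a, b$ has, by \eqref{eq:Open-poly-coeffs},
\begin{equation*}
a_{0,H}(P) = ab\sin\alpha, \quad a_{1/2,H}(P) = -\frac{2(a+b)}{\pi^{1/2}}, \quad a_{1,H}(P) = 2\bigl(g(\alpha)+g(\pi-\alpha)\bigr).
\end{equation*}
The plan is to show that the angle coefficient determines $\alpha$. The area and perimeter then fix $ab$ and $a+b$, hence the unordered pair $\{a,b\}$, and so $P$ up to congruence.

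First I simplify the angle term. Using $\cot(\pi-\alpha) = -\cot\alpha$,
\begin{equation*}
g(\alpha)+g(\pi-\alpha) = \frac{2}{\pi} + \Bigl(1-\frac{\alpha}{\pi}\Bigr)\cot\alpha - \frac{\alpha}{\pi}\cot\alpha = \frac{2}{\pi} + \Bigl(1-\frac{2\alpha}{\pi}\Bigr)\cot\alpha .
\end{equation*}
Set $F(\alpha) = (\pi-2\alpha)\cot\alpha$ on $(0,\pi/2]$. Then $a_{1,H}(P) = \frac{2}{\pi}\bigl(2+F(\alpha)\bigr)$, and it suffices to show that $F$ is strictly monotone on $(0,\pi/2]$.

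This monotonicity is the main (though mild) obstacle. On $(0,\pi/2)$ both factors $\pi - 2\alpha$ and $\cot\alpha$ are positive and strictly decreasing. Their product is therefore strictly decreasing, running from $+\infty$ as $\alpha\downarrow 0$ down to $F(\pi/2)=0$ (the rectangle case). A direct derivative check confirms this: $F'(\alpha) = -2\cot\alpha - (\pi-2\alpha)\csc^2\alpha < 0$. So $F$ is injective, and $a_{1,H}(P)$ determines $\alpha \in (0,\pi/2]$. The other angle $\pi-\alpha$ is then determined as well.

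Finally, with $\alpha$ known, $a_{0,H}$ gives $ab = a_{0,H}/\sin\alpha$ and $a_{1/2,H}$ gives $a+b = -\pi^{1/2}a_{1/2,H}/2$. So $a$ and $b$ are the two roots of $x^2 - (a+b)x + ab = 0$, determined up to order. A parallelogram is determined up to congruence by its two side lengths and the angle between them: swapping $a$ and $b$, or replacing $\alpha$ by $\pi-\alpha$, yields a congruent figure. Hence two parallelograms with the same first three coefficients are congruent. I note that the $O(e^{-q/t})$ remainder guarantees the three coefficients are well defined from $H_P$, so nothing further is needed there.
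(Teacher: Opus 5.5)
Your proof is correct and follows essentially the same route as the paper: you reduce the order-$t$ coefficient to $\frac{4}{\pi} + 2\bigl(1-\frac{2\alpha}{\pi}\bigr)\cot\alpha$, show it is strictly decreasing on $(0,\pi/2]$ so that it recovers $\alpha$, and then use the area and perimeter. The only difference is minor: the paper cites \cite{LR15} for the fact that area, perimeter and $\alpha$ determine a parallelogram, whereas you verify this directly by recovering the side lengths as the roots of $x^2-(a+b)x+ab=0$.
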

\noindent
We recall the following definition of an acute trapezium from \cite[p. 828]{LR15}.

\begin{definition}\label{def:acute}
    A trapezium is called acute if it is a convex, open, set in $\mathbb{R}^2$ with four sides such that the two parallel sides have lengths $b, B$ where $B > b$ is the base, and the other two sides have lengths $\ell, \ell'$. The angles at the base are denoted $\alpha$ and $\beta$, and must satisfy
    $$\alpha + \beta < \frac{\pi}{2}.$$
\end{definition}
\noindent
For acute trapezia, we have the following.

\begin{theorem}\label{thmtrapezium}
    If two acute trapezia have the same height $h$ and the same first three terms in the small-time asymptotic expansion for the Open heat content, then they are congruent.
\end{theorem}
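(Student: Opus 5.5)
From \eqref{eq:Open-poly-coeffs}, the first three coefficients of an acute trapezium $T$ give us its area $|T|$, its perimeter $L(\partial T)$, and the angle sum $\sum_i g(\gamma_i)$. The interior angles of $T$ are $\alpha,\beta$ at the base $B$ and $\pi-\alpha,\pi-\beta$ at the top side $b$. The plan is to reduce everything to the pair $(\alpha,\beta)$ and show that this pair is determined.

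\textbf{Step 1: reduce to the base angles.} Since $h$ is fixed, the area $|T| = \tfrac{h}{2}(b+B)$ determines $b+B$. The legs are $\ell = h/\sin\alpha$ and $\ell' = h/\sin\beta$, so the perimeter gives
\[
\frac{1}{\sin\alpha}+\frac{1}{\sin\beta} = \frac{L(\partial T)-(b+B)}{h}.
\]
The geometry also gives $B-b = h(\cot\alpha+\cot\beta)$. Hence once $(\alpha,\beta)$ is known up to swapping, we recover $b$ and $B$, and the trapezium is determined up to congruence (swapping $\alpha$ and $\beta$ is a reflection).

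\textbf{Step 2: simplify the angle sum.} For the third coefficient, $\cot(\pi-\alpha) = -\cot\alpha$ gives
\[
g(\alpha)+g(\pi-\alpha) = \frac{2}{\pi} + \Big(1-\frac{\alpha}{\pi}\Big)\cot\alpha - \frac{\alpha}{\pi}\cot\alpha = \frac{2}{\pi} + \Big(1-\frac{2\alpha}{\pi}\Big)\cot\alpha .
\]
Write $F(\alpha) = (1-2\alpha/\pi)\cot\alpha$. Then the data yield two invariants:
\[
S := \csc\alpha+\csc\beta, \qquad G := F(\alpha)+F(\beta).
\]

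\textbf{Step 3: injectivity.} It remains to show that $(S,G)$ determines the unordered pair $\{\alpha,\beta\}$ on the region $\alpha,\beta>0$, $\alpha+\beta<\pi/2$.

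Both $\csc$ and $F$ are strictly decreasing on $(0,\pi/2)$: $F$ is a product of two positive decreasing factors. So fixing $S$ defines a curve along which $\beta$ is a decreasing function of $\alpha$, by the implicit function theorem. This mirrors the argument in \cite{LR15} and \cite{B18}.

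Assume $\alpha<\beta$ and parametrise the curve by $\alpha$. The derivative of $G$ along it is
\[
F'(\alpha) - F'(\beta)\,\frac{\csc'(\alpha)}{\csc'(\beta)} .
\]
This derivative has a definite sign exactly when the ratio $F'(x)/\csc'(x) = F'(x)/(-\csc x\cot x)$ is strictly monotone on $(0,\pi/2)$. If it is, $G$ is strictly monotone on each half of the curve, on either side of the symmetric point $\alpha=\beta$. Then $(S,G)$ determines $\{\alpha,\beta\}$, and Step 1 finishes the proof.

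\textbf{Main obstacle.} The hardest step is proving this strict monotonicity of $F'/\csc'$. Writing $\phi(x) = F'(x)\sin^2x/\cos x$, we have
\[
F'(x) = -\frac{2}{\pi}\cot x - \Big(1-\frac{2x}{\pi}\Big)\csc^2 x ,
\]
and $F'/\csc' = -\phi(x)$, where
\[
\phi(x) = -\frac{2\sin x}{\pi} - \frac{1-2x/\pi}{\cos x}.
\]
My first attempt will be to differentiate $\phi$ and show that $\phi'$ has a fixed sign on the relevant range. Should it not have a fixed sign on all of $(0,\pi/2)$, I would instead use the constraint $\alpha+\beta<\pi/2$, so that $\alpha<\pi/4$. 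Then I would compare values directly: along a level set of $S$, show that two distinct pairs would force an equality contradicting a convexity estimate, handling the remaining cases with explicit elementary bounds.
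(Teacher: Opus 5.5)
Your proposal is correct, and the step you flag as the main obstacle goes through on your first attempt. Differentiating gives
\[
\phi'(x)=\frac{\sin x\,\bigl(\sin 2x-(\pi-2x)\bigr)}{\pi\cos^2 x}<0 \quad \text{on } (0,\tfrac{\pi}{2}),
\]
since $\sin 2x=\sin(\pi-2x)<\pi-2x$. So $F'/\csc'=-\phi$ is strictly increasing on all of $(0,\pi/2)$, and your fallback via $\alpha<\pi/4$ and case-by-case bounds is unnecessary. In fact $-\pi\phi$ is exactly the paper's function $r(\theta)=(\sin 2\theta+\pi-2\theta)/\cos\theta$, whose monotonicity is its key lemma, so the analytic core is the same.

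The difference is which level set you parametrise. The paper fixes a level set of $G$ and shows via Lagrange multipliers that $\csc\alpha+\csc\beta$ is monotone on it. To know that $\mathcal{G}_c\cap\tilde T$ is a single arc from the diagonal to the line $\alpha+\beta=\pi/2$, it needs three facts: strict convexity of $G$, the blow-up of $G$ at the edges, and monotonicity of $G$ along the diagonal.

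You instead fix a level set of $S$. Because $\csc$ is a decreasing bijection of $(0,\pi/2)$ onto $(1,\infty)$, that level set is globally the graph of a decreasing function $\beta=b(\alpha)$ over an interval; the implicit function theorem alone only gives this locally. Since $\alpha-b(\alpha)$ is strictly increasing, the half with $\alpha\le\beta$ is itself an interval. This is exactly what you need for ``monotone on each half'' to yield injectivity, and it is worth stating explicitly.

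Your route makes the topological part trivial and the proof shorter. Once $\phi'<0$ is known, it also uses only $\alpha,\beta\in(0,\pi/2)$, never the acuteness condition $\alpha+\beta<\pi/2$.
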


\subsection{Continuity of heat content coefficients}
Whether the coefficients in the small-time asymptotic formula for the Dirichlet heat trace are continuous along certain sequences of convex sets was investigated in \cite{MR25}.    
More precisely, it was shown in \cite[Theorem 6.4]{MR25} that if $P \subset \mathbb{R}^2$ is a bounded, convex polygon, and if $(\Omega_k)_k \subset \mathbb{R}^2$ is a sequence of bounded, convex, open sets with smooth boundary such that $\Omega_k$ converges to $P$ in the sense of Hausdorff, then
\begin{equation*}
    \lim_{k \to \infty} a_{-1,T}(\Omega_k) = a_{-1,T}(P) \mbox{ and }
    \lim_{k \to \infty} a_{-1/2,T}(\Omega_k) = a_{-1/2,T}(P),
\end{equation*}
but
\begin{equation*}
        \lim_{k \to \infty} a_{0,T}(\Omega_k) \neq a_{0,T}(P).
\end{equation*}
On the other hand, it is shown in \cite[Theorem 6.5]{MR25}, that if $\Omega \subset \mathbb{R}^2$ is a bounded, convex, open set with smooth boundary, and if $(P_k)_k \subset \mathbb{R}^2$ is a sequence of bounded, convex polygons such that $P_k$ converges to $\Omega$ in the sense of Hausdorff, then the first three heat trace coefficients of $P_k$ converge to those of $\Omega$. 

Inspired by \cite{MR25}, we investigate the continuity of the third coefficients $a_{1,Q}$ and $a_{1,H}$ in the small-time asymptotic expansions of the Dirichlet and Open heat contents respectively.

For a regular $n$-gon in $\mathbb{R}^2$, the term of order $t$ in the small-time asymptotic expansion of the Open heat content is
\begin{equation*}
    \frac{n}{\pi} + 2\cot\left(\left(\frac{n-2}{n}\right) \pi \right) = O \left(\frac{1}{n}\right), \quad n \to \infty.
\end{equation*}
(see \cite[p. 2234]{vdBG16}). In the limit, this coefficient agrees with the corresponding one for the disk.
For the general case of approximating a convex set with smooth boundary via convex polygons, we have the following.
\begin{theorem}\label{thm:conv}
    Let $\Omega \subset \mathbb{R}^2$ be a bounded, convex, open set with smooth boundary. Suppose that $(P_k)_k \subset \mathbb{R}^2$ is a sequence of bounded, convex, open sets, each with polygonal boundary, such that $P_k$ converges to $\Omega$ in the sense of Hausdorff, denoted $P_k \overset{H}{\to} \Omega$, as $k \to \infty$. Then, 
    \begin{equation*}
        \lim_{k \to \infty} a_{1,Q}(P_k) = a_{1,Q}(\Omega)
        \mbox{ and }
        \lim_{k \to \infty} a_{1,H}(P_k) = a_{1,H}(\Omega).
\end{equation*}
\end{theorem}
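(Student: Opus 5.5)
The plan is to reduce both statements to a single estimate on the exterior angles of $P_k$. For each $P_k$ let the interior angles be $\gamma_{k,1},\dots,\gamma_{k,N_k}$, and set the exterior angles $\theta_{k,i} := \pi - \gamma_{k,i} \in (0,\pi)$. Since $P_k$ is a convex polygon, $\sum_i \theta_{k,i} = 2\pi$. The targets are $a_{1,Q}(\Omega)=\pi$ from \eqref{eq:Dir-smooth-coeffs} and $a_{1,H}(\Omega)=0$ from \eqref{eq:Open-smooth-coefficients}. By \eqref{eq:Dir-poly-coeffs} and \eqref{eq:Open-poly-coeffs}, it therefore suffices to show
\begin{equation*}
\sum_{i=1}^{N_k} c(\pi-\theta_{k,i}) \to \pi, \qquad \sum_{i=1}^{N_k} g(\pi-\theta_{k,i}) \to 0 .
\end{equation*}

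\textbf{Step 1 (local behaviour of the angle functions near $\pi$).} Write $c(\pi-\theta) = \frac{\theta}{2} + \theta\, r_c(\theta)$ and $g(\pi-\theta) = \theta\, r_g(\theta)$, and show that $r_c(\theta), r_g(\theta) \to 0$ as $\theta \downarrow 0$.

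For $g$ this is elementary. We have $g(\pi-\theta) = \frac{1}{\pi} - \frac{\theta}{\pi}\cot\theta$, and the expansion $\theta\cot\theta = 1 - \theta^2/3 + O(\theta^4)$ gives $g(\pi-\theta) = \theta^2/(3\pi) + O(\theta^4)$.

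For $c$ we need the derivative $c'(\pi)$. Differentiating \eqref{eq:Dir-angle} under the integral at $\alpha=\pi$ (the $\sinh$ term vanishes there) gives
\begin{equation*}
\frac{d}{d\theta}c(\pi-\theta)\Big|_{\theta=0} = \int_0^\infty \frac{4s}{\sinh(\pi s)\cosh(\pi s)}\,ds = \int_0^\infty \frac{8s}{\sinh(2\pi s)}\,ds = \frac{8}{4\pi^2}\cdot\frac{\pi^2}{4} = \frac12 .
\end{equation*}
Here the standard integral $\int_0^\infty u/\sinh u\,du = \pi^2/2$ is used. Dominated convergence justifies the differentiation, so $c(\pi-\theta) = \theta/2 + o(\theta)$. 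The consistency check is that, summing $\theta/2$ over total turning $2\pi$, we recover $\pi$.

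We also need both functions to be bounded on $[\delta,\pi)$ for the reduced angle range. Here $g$ is bounded on $(\pi-\theta,\ \theta\le\pi-\delta)$, and $c$ is bounded on $[\delta,\pi]$ by continuity.

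\textbf{Step 2 (all angles flatten uniformly).} This is the main obstacle: show that $\max_i \theta_{k,i} \to 0$ as $k\to\infty$. Suppose instead that along a subsequence some vertex $v_k$ of $P_k$ has $\theta_{k,i}\ge\varepsilon$. Pass to a further subsequence with $v_k \to v \in \partial\Omega$; Hausdorff convergence of convex bodies gives convergence of their boundaries. The two edges at $v_k$ lie on supporting lines of $P_k$, and these lines meet at an angle of at least $\varepsilon$. Extract limits of the two outward normals. Supporting half-planes of $P_k$ converge to supporting half-planes of $\overline{\Omega}$, so we obtain two supporting lines of $\Omega$ at $v$ with distinct normals. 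That makes $v$ a corner, contradicting smoothness of $\partial\Omega$. The delicate point is that the edges could shrink to length zero, but this does no harm because supporting lines pass to the limit regardless of edge length. I would first write this out carefully using the support function $h_{P_k}\to h_\Omega$ uniformly.

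\textbf{Step 3 (conclusion).} Given $\eta>0$, choose $\delta$ so that $|r_c|,|r_g|<\eta$ on $(0,\delta)$. By Step 2, for large $k$ all $\theta_{k,i}<\delta$. Then
\begin{equation*}
\Big|\sum_i c(\pi-\theta_{k,i}) - \pi\Big| \le \eta\sum_i\theta_{k,i} = 2\pi\eta,
\end{equation*}
and similarly $\sum_i g(\pi-\theta_{k,i}) \le 2\pi\eta$. This proves both limits. Vertices with $\theta$ near $\pi$ never arise for large $k$, so the boundedness from Step 1 is only a safety check.
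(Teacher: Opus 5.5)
Your proof is correct, but it takes a somewhat different route from the paper's.

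\textbf{The paper's route.} It first proves a convexity sandwich (Theorem~\ref{thrm:general-bounds}): for convex $f$ with $f(\pi)=0$, the tangent line at $\pi$ gives $a_1\ge -2\pi f'(\pi)$, and the chord from the smallest angle $\beta$ to $\pi$ gives $a_1\le 2\pi f(\beta)/(\pi-\beta)$. It then cites \cite[Lemma 4.7]{MR24} for $\beta_k\to\pi$, so the chord slope tends to $f'(\pi)$. For the Dirichlet coefficient it never differentiates $c$ itself. Instead it applies the sandwich to $\alpha\mapsto\sinh((\pi-\alpha)s)/\cosh(\alpha s)$ for each fixed integration variable $s$. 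It then passes the limit through the $s$-integral by dominated convergence, with the domination coming from the monotonicity of the chord slope (Lemma~\ref{lemma:upper-bound-decreasing}).

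\textbf{Your route.} You use only the first-order expansion $f(\pi-\theta)=-f'(\pi)\theta+o(\theta)$, weighted by $\sum_i\theta_{k,i}=2\pi$. For $c$ you compute $c'(\pi)=-1/2$ once, by dominated convergence on the difference quotient. This is easy to justify: for $\theta\le\pi/2$ the quotient's integrand is dominated by $4s/\sinh(\pi s)$, using $\sinh(\theta s)\le\theta s\cosh(\theta s)$ and $\cosh(\theta s)\le\cosh((\pi-\theta)s)$. Your argument is shorter and more general. Convergence needs no convexity, only left-differentiability at $\pi$ with $f(\pi)=0$. You also prove the uniform flattening of the angles yourself via limits of supporting lines, and that argument is sound.

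\textbf{What each buys.} The paper's convexity framework gives, in exchange, the strict inequality $a_1(P)>-2\pi f'(\pi)$ for every polygon. That is precisely what yields Theorem~\ref{thm:nonconv}, so both continuity theorems come out of one inequality.

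\textbf{One slip to fix.} $\int_0^\infty u/\sinh u\,du=\pi^2/4$, not $\pi^2/2$. Your displayed arithmetic already uses $\pi^2/4$, so the value $1/2$, and hence the limit $\pi$, is right.
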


On the other hand, for the approximation of a convex polygon via convex sets with smooth boundary, we have the following.
\begin{theorem}\label{thm:nonconv}
    Let $P \subset \mathbb{R}^2$ be a bounded, convex, open set with polygonal boundary. Suppose that $(\Omega_k)_k \subset \mathbb{R}^2$ is a sequence of bounded, convex, open sets with smooth boundary such that $\Omega_k$ converges to $P$ in the sense of Hausdorff, denoted $\Omega_k \overset{H}{\to} P$, as $k \to \infty$. Then, 
\begin{equation*}
        \lim_{k \to \infty} a_{1,Q}(\Omega_k) \neq a_{1,Q}(P),
        \mbox{ and }
        \lim_{k \to \infty} a_{1,H}(\Omega_k) \neq a_{1,H}(P).
\end{equation*}
\end{theorem}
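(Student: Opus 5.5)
Since each $\Omega_k$ has smooth convex boundary, the coefficients \eqref{eq:Dir-smooth-coeffs} and \eqref{eq:Open-smooth-coefficients} give $a_{1,Q}(\Omega_k)=\pi$ and $a_{1,H}(\Omega_k)=0$ for every $k$, independently of $k$. Hence both limits exist and equal $\pi$ and $0$ respectively. The proof therefore reduces to showing that, for every bounded convex polygon $P$ with interior angles $\gamma_1,\dots,\gamma_N\in(0,\pi)$,
\begin{equation*}
\sum_{i=1}^N c(\gamma_i)\neq \pi \quad\text{and}\quad \sum_{i=1}^N g(\gamma_i)\neq 0.
\end{equation*}
Here we use the constraint $\sum_i(\pi-\gamma_i)=2\pi$.

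\textbf{Open heat content.} We show $g(\alpha)>0$ for $\alpha\in(0,\pi)$; then $a_{1,H}(P)>0$ since $N\ge 3$.
\begin{itemize}
\item For $\alpha\in(0,\pi/2]$, $\cot\alpha\ge 0$ and $1-\alpha/\pi>0$, so $g(\alpha)\ge 1/\pi>0$.
\item For $\alpha\in(\pi/2,\pi)$, substitute $\beta=\pi-\alpha\in(0,\pi/2)$. Then $g=\frac{1}{\pi}-\frac{\beta}{\pi}\cot\beta=\frac{1}{\pi}(1-\beta\cot\beta)$. Since $\tan\beta>\beta$ on $(0,\pi/2)$, we get $\beta\cot\beta<1$, so $g>0$.
\end{itemize}
This settles the Open case.

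\textbf{Dirichlet heat content.} I expect this to be the main obstacle: the needed inequality is not pointwise positivity but a comparison of the sum with $\pi$. The natural plan is to use the exterior angles $\beta_i=\pi-\gamma_i\in(0,\pi)$ with $\sum\beta_i=2\pi$. A smooth boundary corresponds to a continuum of infinitesimal turnings, with $a_{1,Q}=\pi$. So we want to show that $\tilde c(\beta):=c(\pi-\beta)$ satisfies $\tilde c(\beta)>\beta/2$ for $\beta\in(0,\pi)$. Summing then gives $\sum c(\gamma_i)>\frac12\sum\beta_i=\pi$.

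The heuristic supports this inequality. Near $\beta=0$, the integrand of \eqref{eq:Dir-angle} expands to first order as $\frac{4\beta\theta}{\sinh(\pi\theta)\cosh(\pi\theta)}$. This gives $\tilde c'(0)=\int_0^\infty\frac{8\theta}{\sinh 2\pi\theta}\,d\theta=\frac{8}{4\pi^2}\cdot\frac{\pi^2}{4}=\frac12$, which is consistent with $\tilde c(\beta)\approx\beta/2$. So strict inequality requires convexity of $\tilde c$, or a direct comparison.

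I would first try to show that $\tilde c$ is strictly convex on $(0,\pi)$ with $\tilde c(0)=0$ and $\tilde c'(0)=1/2$; strict convexity then yields $\tilde c(\beta)>\beta/2$. Concretely, I would differentiate twice under the integral in $\beta$ with $\alpha=\pi-\beta$. Alternatively, I would rewrite $c$ through a known closed form (van den Berg–Srisatkunarajah express $c$ via such integrals) and check positivity of $\frac{d^2}{d\beta^2}\frac{\sinh(\beta\theta)}{\cosh((\pi-\beta)\theta)}$. That second derivative equals $\theta^2$ times a positive combination of hyperbolic functions, and I expect it to be manifestly positive.

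If convexity is messy, a fallback is to prove $\tilde c(\beta)-\beta/2>0$ by writing $\beta/2=\int_0^\infty\frac{4\sinh(\beta\theta)\,(\ldots)}{\ldots}\,d\theta$ in a comparable integral form and comparing integrands. Either route gives $a_{1,Q}(P)>\pi=\lim_k a_{1,Q}(\Omega_k)$, completing the proof.
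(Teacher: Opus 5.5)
Your proposal is correct and is essentially the paper's argument: the paper also reduces the statement to $a_{1,H}(P)>0$ and $a_{1,Q}(P)>\pi$, and proves the Dirichlet case via convexity of $g_\theta(\alpha)=\sinh((\pi-\alpha)\theta)/\cosh(\alpha\theta)$, where $g_\theta''(\alpha)=2\theta^2\cosh(\pi\theta)\sinh(\alpha\theta)/\cosh^3(\alpha\theta)>0$ confirms your expectation. It then uses the tangent line at $\alpha=\pi$, sums with $\sum_i(\pi-\gamma_i)=2\pi$, and integrates, which is equivalent to your per-angle bound $c(\gamma)>(\pi-\gamma)/2$. The differences are minor: the paper applies the tangent-line bound pointwise in $\theta$ before integrating, which spares you justifying $\tilde c'(0)=1/2$ by differentiating under the integral; and for the Open case it uses convexity of $g$ with $g'(\pi)=0$ instead of your direct check that $g>0$.
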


\subsection*{Plan of the paper}
In Section \ref{sec:inverse}, we prove the inverse results for the Open heat content, Theorems \ref{thm:triangles}, \ref{thm:parallel}, and \ref{thmtrapezium}.
In Section \ref{sec:continuity}, we prove Theorems \ref{thm:conv} and \ref{thm:nonconv}.
A central factor in the arguments of all proofs is that the term of order $t$ in the small-time asymptotic expansion, defined by \eqref{eq:Open-angle}, is a convex function.

\subsection*{Acknowledgements}
Part of this work was undertaken while RE was a visiting researcher at Durham University supported by JSPS KAKENHI Grant Numbers JP24KJ1170, JP26K22276.
Part of this work was carried out while JW was undertaking a summer research project in summer 2026 at Durham University supported by a London Mathematical Society Undergraduate Research Bursary and the Department of Mathematical Sciences, Durham University.
The authors are grateful to Yulun Wu who undertook a summer research project at Durham University in summer 2023 where preliminary work on this project was carried out.

\section{Inverse problems for the Open heat content}
\label{sec:inverse}

\subsection{Triangles}\label{sec:triangles}
In this section, we show that the small-time asymptotic expansion of the Open heat content can determine a triangle in the collection of triangles. We follow the strategy that was developed in \cite{GM13} for the heat trace.

As in \cite{GM13}, we consider a triangle $T \subset \mathbb{R}^2$ with interior angles $\alpha, \beta, \gamma$ and we recall that
\begin{equation}\label{eq:hclink}
    \frac{L(\partial T)^2}{4|T|} = \cot{\frac{\alpha}{2}} + \cot{\frac{\beta}{2}} + \cot{\frac{\gamma}{2}}.
\end{equation}
Note that the left-hand side is determined by the first two terms in the small-time asymptotic expansion of the Open heat content \eqref{eq:Open-poly-coeffs}.

In addition, we observe that the third term in the small-time asymptotic expansion of the Open heat content \eqref{eq:Open-poly-coeffs} which is defined by $g$ in \eqref{eq:Open-angle} is convex on $(0, \pi]$. Indeed, on $(0,\pi)$, we have that $g$ is twice differentiable, and taking derivatives we have
    \begin{align*}
    g'(\alpha) &= \frac{-1}{2\pi\sin^2(\alpha)}(2(\pi-\alpha) + \sin(2 \alpha))<0, \\
    g''(\alpha) &= \frac{2}{\pi \sin^2(\alpha)}(1 + (\pi - \alpha)\cot(\alpha)) = \frac{2 g(\alpha)}{\sin^2(\alpha)},
    \end{align*}
and $g(\pi)=0$.

We define $w: (0,\pi) \to \mathbb{R}$ as
\begin{equation*}
    w(\theta) := \cot{\frac{\theta}{2}},
\end{equation*}
and observe that, for $\theta \in (0,\pi)$,
\begin{align}
    w'(\theta) &= -\frac{1}{2(\sin (\theta/2))^2} < 0, \label{eq:sdec}\\
    w''(\theta) &= \frac{\cos(\theta/2)}{2(\sin(\theta/2))^3} >0. \label{eq:sconv}
\end{align}
So $w$ is strictly decreasing and strictly convex on $(0,\pi)$.

Analogously to \cite[Proposition 1]{GM13}, we prove the following.
\begin{proposition}\label{prop:Tangles}
    A triple $(\alpha, \beta, \gamma)$ of positive real numbers satisfying $\alpha + \beta + \gamma = \pi$ is uniquely determined, up to ordering, by the values of
    \begin{align}\label{eq:apfnct}
       W(\alpha, \beta, \gamma) := w(\alpha) + w(\beta) + w(\gamma)
       = \cot{\frac{\alpha}{2}} + \cot{\frac{\beta}{2}} + \cot{\frac{\gamma}{2}},
    \end{align}
    and
    \begin{align}
        G(\alpha, \beta, \gamma) 
        &:= g(\alpha) + g(\beta) + g(\gamma) \nonumber\\
        &= \frac{3}{\pi} + \left(1- \frac{\alpha}{\pi}\right)\cot(\alpha) + \left(1- \frac{\beta}{\pi}\right)\cot(\beta) + \left(1- \frac{\gamma}{\pi}\right)\cot(\gamma). \label{eq:anglefnct}
    \end{align}
\end{proposition}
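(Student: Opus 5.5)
We follow the strategy of \cite[Proposition 1]{GM13}. Fix the value $W_0$ of $W$. The level set $\{W = W_0\}$ inside the open simplex $\Delta = \{(\alpha,\beta,\gamma): \alpha,\beta,\gamma>0,\ \alpha+\beta+\gamma=\pi\}$ is the boundary of the sublevel set $\{W \le W_0\}$. Because $w$ is strictly convex, $W$ is strictly convex on $\Delta$. By \eqref{eq:sdec}, $W$ blows up as one approaches $\partial\Delta$, and its unique minimum is at the equilateral point, with value $3\sqrt3$. So for $W_0>3\sqrt3$ the level set is a smooth closed convex curve around the equilateral point, and for $W_0=3\sqrt3$ it is that single point, where there is nothing to prove. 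By the $S_3$ symmetry, the curve splits into six congruent arcs. Each arc joins an isosceles configuration with $\alpha=\beta<\gamma$ to one with $\alpha=\beta>\gamma$, and along the open arc the angles are ordered, say $\alpha<\beta<\gamma$ (up to relabelling). It therefore suffices to show that $G$ is strictly monotone along one such arc. Then $G$ takes each value at most once per arc, and distinct arcs correspond to reorderings of the same triple, which gives uniqueness up to ordering.

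To show the monotonicity, parametrise the arc near a point and use Lagrange multipliers. Tangent vectors $(d\alpha,d\beta,d\gamma)$ to the level curve satisfy
\begin{equation*}
d\alpha+d\beta+d\gamma=0 \quad\text{and}\quad w'(\alpha)\,d\alpha+w'(\beta)\,d\beta+w'(\gamma)\,d\gamma=0,
\end{equation*}
so they are proportional to the cross product
\begin{equation*}
\bigl(w'(\beta)-w'(\gamma),\ w'(\gamma)-w'(\alpha),\ w'(\alpha)-w'(\beta)\bigr).
\end{equation*}
The derivative of $G$ along the curve is then, up to a nonzero factor, the determinant
\begin{equation*}
D=\det\begin{pmatrix}1&1&1\\ w'(\alpha)&w'(\beta)&w'(\gamma)\\ g'(\alpha)&g'(\beta)&g'(\gamma)\end{pmatrix}.
\end{equation*}
We must show $D\neq 0$ whenever $\alpha,\beta,\gamma$ are distinct. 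Since $w'$ is strictly increasing, by \eqref{eq:sconv}, we can write $g'=\phi\circ w'$ on $(0,\pi)$. The determinant $D$ then vanishes exactly when the three points $(w'(\theta),g'(\theta))$ are collinear. So it suffices to show that $\phi$ is strictly convex or strictly concave, i.e.\ that the ratio $g''/w''$ is strictly monotone in $\theta$. Using $g''=2g/\sin^2$, this ratio is
\begin{equation*}
\frac{g''(\theta)}{w''(\theta)}=\frac{4g(\theta)\sin^3(\theta/2)}{\sin^2(\theta)\cos(\theta/2)}=\frac{g(\theta)\tan(\theta/2)}{\cos^2(\theta/2)}.
\end{equation*}

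The main obstacle is proving that this ratio is strictly monotone on $(0,\pi)$. At $0$ we have $g\sim 1/\theta$ and $\tan(\theta/2)\sim\theta/2$, so the ratio tends to $1/2$. At $\pi$, $g$ vanishes to second order, since $g(\pi)=0$ and $g'(\pi)=0$ from the formula for $g'$. We would first try to show that its logarithmic derivative, $g'/g+1/\sin\theta+\tan(\theta/2)$, has a fixed sign. After clearing denominators this reduces to a single-variable trigonometric inequality involving $(\pi-\theta)$, which we would check by a Taylor or series argument, possibly splitting at $\theta=\pi/2$.

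If the monotonicity of the ratio fails, the fallback is weaker but still sufficient. We would restrict to the constraint $\alpha+\beta+\gamma=\pi$ and argue as in \cite{GM13}. Set $\gamma=\pi-\alpha-\beta$ and show directly that the quantity proportional to $dG/ds$ along the arc has a constant sign, using the convexity of $g$ and the identity $g''=2g/\sin^2$. Once $D\neq0$ is established for distinct angles, continuity gives a constant sign on each open arc. Hence $G$ is strictly monotone there, which completes the proof.
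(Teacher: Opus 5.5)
Your overall strategy is sound and is essentially the paper's. The paper fixes a level curve of $G$ and shows $W$ is monotone on it. You fix a level curve of $W$ instead, which is equally valid: $W$ is also strictly convex and symmetric, and it blows up at $\partial\mathcal{D}$ because $\cot(\theta/2)\to\infty$ and $w>0$. Both routes reduce to the same fact, that $M=g''/w''$ is strictly monotone on $(0,\pi)$. Your collinearity formulation is tidier than the paper's case analysis on the signs of $R,S$ followed by Rolle's theorem: you write $g'=\phi\circ w'$ with $\phi$ strictly convex, so no line meets its graph three times.

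The gap is that you never prove this monotonicity. You call it the main obstacle, sketch how you would attack it, and add a fallback that only restates the goal (that $dG/ds$ has constant sign). Everything else in the argument is routine, so this inequality is the real content of the proposition, and as written the proof is incomplete.

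The missing step is short. Write $g(\theta)=\frac{\sin\theta+(\pi-\theta)\cos\theta}{\pi\sin\theta}$. Your ratio then becomes $M(\theta)=\frac{\sin\theta+(\pi-\theta)\cos\theta}{2\pi\cos^4(\theta/2)}$, and differentiating gives
\[
M'(\theta)=\frac{\sin^2(\theta/2)}{\pi\cos^5(\theta/2)}\Bigl(2\cos(\theta/2)-(\pi-\theta)\sin(\theta/2)\Bigr).
\]
Your logarithmic-derivative route reaches the same bracket. One has $g'/g+1/\sin\theta+\tan(\theta/2)=2\tan(\theta/2)-\frac{(\pi-\theta)\sin\theta}{\sin\theta+(\pi-\theta)\cos\theta}$. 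Clearing the positive denominator leaves $\frac{2\sin^2(\theta/2)}{\cos(\theta/2)}$ times that bracket.

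Now set $u=(\pi-\theta)/2\in(0,\pi/2)$. The bracket equals $2(\sin u-u\cos u)=2\cos u\,(\tan u-u)>0$. The paper argues differently: the bracket has derivative $-\tfrac{\pi-\theta}{2}\cos(\theta/2)<0$ and vanishes at $\theta=\pi$.

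So $M$ increases strictly, from $1/2$ at $0$ to $+\infty$ at $\pi$. Hence $\phi$ is strictly convex, your determinant $D$ is nonzero at every non-isosceles point, and the rest of your argument goes through. No Taylor expansion, splitting at $\pi/2$, or fallback is needed.
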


As in \cite{GM13}, the proof of Theorem \ref{thm:triangles} then follows from Proposition \ref{prop:Tangles}.

\begin{proof}[Proof of Theorem \ref{thm:triangles}]
    From the small-time asymptotic expansion of $H_T(t)$ \eqref{eq:Open-poly-coeffs}, we know $|T|$, $L(\partial T)$ and $G(\alpha, \beta, \gamma)$ and so, from \eqref{eq:hclink} and Proposition \ref{prop:Tangles}, we can determine the triangle up to dilation. The area of the triangle allows us to determine the dilation factor and hence the triangle.
\end{proof}

So it remains to prove Proposition \ref{prop:Tangles}. Let
\begin{equation*}
    \mathcal{D} := \{(\alpha, \beta, \gamma) : \alpha, \beta, \gamma >0, \alpha + \beta + \gamma = \pi\} \subset (0,\pi)\times (0,\pi)\times (0,\pi).
\end{equation*}
Then $\mathcal{D}$ is the space of all possible configurations of angles of a triangle. The union of the lines that bisect each of the interior angles of $\mathcal{D}$ corresponds to the collection of all isosceles triangles. The equilateral triangle corresponds to the intersection of these three lines.

To prove Proposition \ref{prop:Tangles}, we require the following analogue of \cite[Lemma 1]{GM13}.
\begin{lemma}[c.f. Lemma 1 from \cite{GM13}]\label{lem:GM}
    Consider the functions $W$, $G$, and $V(\alpha, \beta, \gamma) = \alpha + \beta +\gamma$ on $(0,\pi)\times (0,\pi)\times (0,\pi)$. Then
    \begin{enumerate}
        \item[(a)] The function $G$ is strictly convex on $(0,\pi)\times (0,\pi)\times (0,\pi)$.
        \item[(b)] The gradients $\nabla G$, $\nabla W$, $\nabla V$ are linearly independent at all non-isosceles points of $\mathcal{D}$.
    \end{enumerate}
\end{lemma}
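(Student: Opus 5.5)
For part (a), I would use that $G$ is separable: $G(\alpha,\beta,\gamma)=g(\alpha)+g(\beta)+g(\gamma)$. Its Hessian is therefore the diagonal matrix $\mathrm{diag}(g''(\alpha),g''(\beta),g''(\gamma))$, and it suffices to show $g''>0$ on $(0,\pi)$. We have already computed $g''(\theta)=2g(\theta)/\sin^2\theta$, so it is enough that $g>0$ on $(0,\pi)$. This holds because $g'<0$ on $(0,\pi)$ and $g(\theta)\to g(\pi)=0$ as $\theta\uparrow\pi$. Hence the Hessian is positive definite, and $G$ is strictly convex on the open cube.

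For part (b), I would follow the idea of \cite{GM13}. We have $\nabla V=(1,1,1)$, $\nabla W=(w'(\alpha),w'(\beta),w'(\gamma))$ and $\nabla G=(g'(\alpha),g'(\beta),g'(\gamma))$. These are linearly dependent exactly when the $3\times 3$ determinant with these rows vanishes, i.e. when the three planar points $(w'(\theta),g'(\theta))$, $\theta\in\{\alpha,\beta,\gamma\}$, are collinear.

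By \eqref{eq:sconv}, $w'$ is strictly increasing on $(0,\pi)$. So I set $x=w'(\theta)$ and $F(x):=g'\big((w')^{-1}(x)\big)$, and the three points lie on the graph of $F$. At a non-isosceles point of $\mathcal{D}$ the angles are pairwise distinct, so the three $x$-coordinates are distinct. Collinearity would then force $F''$ to vanish somewhere, by applying Rolle's theorem twice to $F$ minus the affine interpolant. It therefore suffices to prove that $F$ is strictly convex (or strictly concave).

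Now $F'(x)=g''(\theta)/w''(\theta)$ with $\theta=(w')^{-1}(x)$ increasing in $x$. So the task reduces to showing that
\begin{equation*}
r(\theta):=\frac{g''(\theta)}{w''(\theta)}=\frac{4g(\theta)\sin^3(\theta/2)}{\sin^2\theta\,\cos(\theta/2)}=g(\theta)\tan(\theta/2)\sec^2(\theta/2)
\end{equation*}
is strictly monotone on $(0,\pi)$. The boundary behaviour suggests that $r$ is increasing. As $\theta\downarrow 0$ we have $g\sim 1/\theta$, so $r\to 1/2$. As $\theta\uparrow\pi$, a Taylor expansion gives $g(\pi-\varepsilon)\approx \varepsilon^2/(3\pi)$, while $\tan(\theta/2)\sec^2(\theta/2)\approx 8/\varepsilon^3$, so $r\to\infty$.

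To prove $r'>0$, I would take logarithmic derivatives. Since $\frac{d}{d\theta}\log\big(\tan(\theta/2)\sec^2(\theta/2)\big)=\frac{1}{\sin\theta}+\tan(\theta/2)$ and $g>0$, the claim $r'>0$ is equivalent to
\begin{equation*}
g(\theta)\Big(\frac{1}{\sin\theta}+\tan\frac{\theta}{2}\Big)+g'(\theta)>0,\qquad \theta\in(0,\pi).
\end{equation*}
Multiplying by $\sin^2\theta$ and substituting the explicit formulas for $g$ and $g'$ turns this into an elementary trigonometric inequality in $\theta$ and $\pi-\theta$.

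This last inequality is the main obstacle. I would first try to rewrite it as a function $E(\theta)$ satisfying $E(\pi)=0$ (or with a known sign at an endpoint) and show that $E$ is monotone by differentiating once more. If that becomes messy, I would fall back on the substitution $u=\pi-\theta$, and bound $\sin$ and $\cot$ by truncated Taylor series on subintervals near the endpoints, combined with a direct sign check in the middle range.
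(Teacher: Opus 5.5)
Your reduction is sound, and it is essentially the paper's route. Your $r=g''/w''$ is exactly the paper's
\[
M(\theta)=\frac{\sin\theta+(\pi-\theta)\cos\theta}{2\pi\cos^4(\theta/2)}.
\]
Your collinearity formulation on the graph of $F$ repackages the paper's argument that $K=Rg'+Sw'+T$ has at most one turning point. It also absorbs the paper's case split ($R=0$, $S=0$, same or opposite signs of $R,S$) into one fact: a non-vertical line meets the graph of a strictly convex function at most twice. Part (a) is fine, and deriving $g>0$ from $g'<0$ and $g(\pi^-)=0$ is slightly more careful than the paper.

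The gap is that you never prove $r$ is strictly increasing on $(0,\pi)$. That is the entire analytic content of part (b). The endpoint limits $r(0^+)=1/2$ and $r(\pi^-)=\infty$ do not imply monotonicity, and the Taylor-bounds fallback is not yet an argument.

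Your first plan closes the gap directly. Use
\[
\frac{1}{\sin\theta}+\tan\frac{\theta}{2}=\frac{2-\cos\theta}{\sin\theta},\qquad g=\frac{\sin\theta+(\pi-\theta)\cos\theta}{\pi\sin\theta},\qquad g'=-\frac{(\pi-\theta)+\sin\theta\cos\theta}{\pi\sin^2\theta}.
\]
Multiplying your inequality by $\pi\sin^2\theta$ then turns it into
\[
(1-\cos\theta)\bigl[2\sin\theta-(\pi-\theta)(1-\cos\theta)\bigr]=4\sin^3(\theta/2)\,L(\theta)>0,\qquad L(\theta)=2\cos(\theta/2)-(\pi-\theta)\sin(\theta/2).
\]
Here $L(\pi)=0$ and $L'(\theta)=-\tfrac{\pi-\theta}{2}\cos(\theta/2)<0$ on $(0,\pi)$, so $L>0$ there. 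Hence $r'>0$, so $F''>0$, and your Rolle argument goes through.

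This is exactly the paper's step. It computes $M'(\theta)=\frac{\sin^2(\theta/2)}{\pi\cos^5(\theta/2)}L(\theta)$ and concludes with the same auxiliary function $L$.
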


With Lemma \ref{lem:GM} in hand, the proof of Proposition \ref{prop:Tangles} follows via the same strategy as that developed in \cite{GM13} which we recall here for the reader's convenience. The aim is to show that $W$ is monotone on the level sets of $G$.

\begin{proof}[Proof of Proposition \ref{prop:Tangles}]
The strict convexity of $G$ implies that the sublevel set 
$$ \mathcal{G}_{\leq s} = \{p \in (0,\pi)\times (0,\pi)\times (0,\pi) : G(p) \leq s\}$$
is strictly convex for any $s>0$. Note that
$$ \mathcal{G}_{s} = \{p \in (0,\pi)\times (0,\pi)\times (0,\pi) : G(p) = s\}$$
is the boundary of $\mathcal{G}_{\leq s}$.
It follows that the sets $\mathcal{G}_s \cap \mathcal{D}$ are either empty, the point $(\frac{\pi}{3}, \frac{\pi}{3}, \frac{\pi}{3})$, or a closed curve in the interior of $\mathcal{D}$ containing the point $(\frac{\pi}{3}, \frac{\pi}{3}, \frac{\pi}{3})$.
The point $(\frac{\pi}{3}, \frac{\pi}{3}, \frac{\pi}{3})$ is determined by $G$ alone and $G(\frac{\pi}{3}, \frac{\pi}{3}, \frac{\pi}{3})= (3/\pi) + 2 \cot(\pi/3)$. 
For $s > (3/\pi) + 2 \cot(\pi/3)$, we need to show that $W$ is strictly monotone along the part of $\mathcal{G}_s \cap \mathcal{D}$ between the two endpoints $p, q$ that correspond to isosceles triangles in the region where $\alpha \leq \beta \leq \gamma$.
If $W$ is not strictly monotone along this curve, then there would be a point $r$ lying on this curve between $p$ and $q$ where $W$ is stationary. By the Lagrange Multiplier Theorem, $\nabla W$ would then be a linear combination of $\nabla G$ and $\nabla V$ contradicting Part \textit{(b)} of Lemma \ref{lem:GM}. 
\end{proof}

\begin{proof} [Proof of Lemma \ref{lem:GM}]
    For part \textit{(a)}, the matrix of second derivatives of $G$ is a diagonal matrix with entries $g''(\alpha)$, $g''(\beta)$, $g''(\gamma)$.
    Since $g$ is strictly convex on $(0,\pi)$, the matrix of second derivatives of $G$ is positive definite for all $(\alpha, \beta, \gamma) \in (0,\pi)\times (0,\pi)\times (0,\pi)$. Hence $G$ is is strictly convex.

    For part \textit{(b)}, we suppose that there is a non-isosceles point $(\alpha, \beta, \gamma)$ (that is, $\alpha, \beta, \gamma$ are distinct) and $R, S, T \in \mathbb{R}$ not all zero such that
    \begin{equation*}
        R\nabla G + S\nabla W + T \nabla V = (0,0,0),
    \end{equation*}
    that is
    \begin{equation}\label{eq:zeros}
        Rg'(\theta) + Sw'(\theta) + T = 0,
    \end{equation}
    has three distinct solutions $\theta = \alpha$, $\theta= \beta$, $\theta= \gamma$.
    Our goal is to show that \eqref{eq:zeros} has at most two solutions.

    We observe that $R$ and $S$ cannot both be zero.
    If $R=0$, $S, T \neq 0$, then $w'(\theta) = -T/S$. But $w$ is strictly convex on $(0,\pi)$, so $w'$ is strictly increasing and $w'(\theta) = T/S$ can have at most one solution. We can treat the case where $S=0$, $R, T \neq 0$ similarly using that $g$ is strictly convex on $(0,\pi)$.
    
    It remains to treat the case where $R, S \neq 0$.
    Set 
    \begin{equation}\label{eq:K}
        K(\theta) = Rg'(\theta) + Sw'(\theta) + T.
    \end{equation}
    Then $K'(\theta) = Rg''(\theta) + Sw''(\theta)$.
    Since $g, w$ are strictly convex on $(0,\pi)$, if $R,S$ have the same sign, then $K$ is monotone and so has at most one zero.
    
    The case where $R,S$ have different signs is more delicate as $K$ need not always be monotone or convex for all values of $R, S$. 
    In order for $K$ to have three distinct zeros, it would need to have two turning points by Rolle's Theorem. 
    To show that $K$ has at most two zeros, it is sufficient to show that $K$ has at most one turning point.

    We define $M:(0,\pi) \to \mathbb{R}$ as
    \begin{equation}\label{eq:monotone}
        M(\theta) = \frac{g''(\theta)}{w''(\theta)}.
    \end{equation}
    Then
    \begin{equation*}
        M(\theta) = \frac{1}{2\pi (\cos(\theta/2))^4}(\sin \theta + (\pi - \theta)\cos \theta),
    \end{equation*}
    and
    \begin{equation*}
        M'(\theta) = \frac{(\sin(\theta/2))^2}{\pi (\cos(\theta/2))^5} (2\cos(\theta/2) - (\pi - \theta)\sin(\theta/2)).
    \end{equation*}
    Now we define $L: (0,\pi) \to \mathbb{R}$ as
    \begin{equation*}
        L(\theta) := 2\cos(\theta/2) - (\pi - \theta)\sin(\theta/2),
    \end{equation*}
    and observe that
    \begin{equation*}
        L'(\theta) = - \frac{(\pi - \theta)}{2} \cos(\theta/2) < 0
    \end{equation*}
    for $\theta \in (0,\pi)$. So $L$ is decreasing on $(0,\pi)$ and $L(\pi) =0$. Hence $L(\theta) > 0$ on $(0,\pi)$ and we conclude that $M$ is increasing on $(0,\pi)$.

    We return to $K'(\theta)$ and write
    \begin{equation*}
        K'(\theta) = Sw''(\theta)\left(1 + \frac{R}{S}M(\theta)\right).
    \end{equation*}
    For $K$ to have a stationary point at $\theta$, we must have $K'(\theta)=0$ which is equivalent to $M(\theta) = -S/R$. Since $M$ is increasing, there can only be at most one solution and so $K$ has at most one turning point.
\end{proof}

\subsection{Quadrilaterals}\label{subsec:quad}
In this section, we first show that the small-time asymptotic expansion of the Open heat content can determine a parallelogram $P \subset \mathbb{R}^2$ within the collection of parallelograms. We follow the strategy that was developed in \cite{LR15} for the heat trace.

\begin{proof}[Proof of Theorem \ref{thm:parallel}] 
    Let $P \subset \mathbb{R}^2$ be a parallelogram with interior angles $\alpha, \alpha, \pi-\alpha, \pi-\alpha$ where $\alpha \in (0, \frac{\pi}{2}]$.
    It was shown in \cite[p. 827]{LR15} that the area $|P|$, the perimeter $L(\partial P)$, and the interior angle $\alpha \in (0, \frac{\pi}{2}]$ determine the parallelogram $P$.
    
    From the small-time asymptotic expansion of $H_P(t)$ \eqref{eq:Open-poly-coeffs}, we know $|P|$ and $L(\partial P)$.
    So, it remains to show that we can determine $\alpha$ from the term of order $t$ in the small-time asymptotic expansion for the Open heat content which is
    \begin{equation*}
         \frac{4}{\pi} + 2 \left(1-\frac{2\alpha}{\pi} \right) \cot \alpha := p(\alpha).
    \end{equation*}
    For $\alpha \in (0, \frac{\pi}{2})$, consider
    \begin{align*}
        p'(\alpha) &= -\frac{4}{\pi} \cot \alpha - \frac{2}{(\sin \alpha)^2} \left(1-\frac{2\alpha}{\pi} \right) < 0. 
    \end{align*}
    We see that $p$ is decreasing on $(0, \frac{\pi}{2})$ and $p(\frac{\pi}{2})=\frac{4}{\pi}$.
    Thus we can uniquely determine the angle $\alpha$ from the term linear in $t$, concluding the proof. 
\end{proof}

\begin{remark}
    In particular, from the proof of Theorem \ref{thm:parallel}, since $p$ is decreasing on $(0, \frac{\pi}{2})$, we see that the small-time asymptotic expansion of the Open heat content can distinguish a rectangle from a parallelogram with $\alpha \in (0, \frac{\pi}{2})$.
\end{remark}

Next we prove Theorem \ref{thmtrapezium} for acute trapezia by following a similar strategy to that used in \cite{GM13} and in Section \ref{sec:triangles} via Lagrange multipliers.

\begin{proof}[Proof of Theorem \ref{thmtrapezium}]
    Let $Z$ be an acute trapezium with interior angles $\alpha, \beta, \pi - \alpha, \pi - \beta$ such that $\alpha + \beta < \frac{\pi}{2}$. We have that
    $$ |Z| = \frac{(B+b)}{2}h \Rightarrow B + b = \frac{2|Z|}{h},$$
    and
    $$ L(\partial Z) = B + b + \ell + \ell'.$$
    Since we are given the height $h$, the first term of the small-time asymptotic expansion of the Open heat content \eqref{eq:Open-poly-coeffs} uniquely determines $B+b$, and hence the second term also uniquely determines $\ell + \ell'$.

    We have that $\ell = h \csc \alpha$ and $ \ell' = h \csc \beta$ so
    $$ \csc \alpha + \csc \beta = \frac{\ell +\ell'}{h}.$$
    By \eqref{eq:Open-poly-coeffs}, the term of order $t$ in the small-time asymptotic expansion for the Open heat content is
    $$ \frac{4}{\pi} + \left(1 - \frac{2\alpha}{\pi}\right) \cot \alpha + \left(1 - \frac{2\beta}{\pi}\right) \cot \beta.$$
    We wish to prove that the following system of equations has a unique solution:
    \begin{align*}
      \csc \alpha + \csc \beta & = c_1, \\
       \left(1 - \frac{2\alpha}{\pi}\right) \cot \alpha + \left(1 - \frac{2\beta}{\pi}\right) \cot \beta & = c_2.
    \end{align*}
    Consider the triangle
    $$T := \left\{(\alpha, \beta) \in \mathbb{R}^2 : \alpha, \beta >0, \alpha+\beta < \frac{\pi}{2}\right\}$$
    and the triangle $\tilde{T} \subset T$ defined as
    $$\tilde{T} := \left\{(\alpha, \beta) \in \mathbb{R}^2 : \alpha, \beta >0, \alpha+\beta < \frac{\pi}{2}, \beta \leq \alpha\right\}.$$
    Note that $T$ is the space of all configurations of the angles of an acute trapezium, and $\tilde{T}$ is the quotient space of such configurations up to swapping the roles of $\alpha, \beta$.
    We set 
    \begin{equation*}
        Q(\alpha, \beta) := \csc \alpha + \csc \beta,
    \end{equation*}
    and
    \begin{equation*}
        G(\alpha, \beta) := \left(1 - \frac{2\alpha}{\pi}\right) \cot \alpha + \left(1 - \frac{2\beta}{\pi}\right) \cot \beta.
    \end{equation*}
    We aim to show that $Q$ is monotone on the level sets of $G$ in $\tilde{T}$.
    The matrix of second derivatives of $G$ is a diagonal matrix with entries 
    \begin{equation*}
        \frac{\partial ^2G}{\partial \alpha^2}(\alpha ,\beta)
        = \frac{1}{\pi (\sin \alpha)^4}
        (2(\sin \alpha)^2(1-\cos 2\alpha) + \sin 2\alpha (\sin 2\alpha + (\pi-2\alpha))) > 0
    \end{equation*}
    for $\alpha \in (0, \frac{\pi}{2})$, and similarly for the partial derivative with respect to $\beta$. So the Hessian of $G$ is positive definite. Hence $G$ is strictly convex and, as a consequence, the sub-level sets of $G$
    $$ \mathcal{G}_{\leq c} = \{p \in T : G(p) \leq c\}$$
    are convex. 
    The level sets $$ \mathcal{G}_c = \{p \in T : G(p) = c\}$$ cannot be convex subsets contained in $\tilde{T}$ otherwise they would not be convex sets in $T$ as they would be disconnected. Similarly, the level sets cannot intersect the line $\alpha = \frac{\pi}{2} - \beta$ more than once.
    In addition, 
    $$\lim_{\alpha \to 0} G(\alpha, \beta) = \infty, \quad \lim_{\beta \to 0} G(\alpha, \beta) = \infty$$
    so the sub-level sets of $G$ do not touch the edges $\alpha = 0$, $\beta = 0$ of $T$.
    We also have that the function $\tilde{G}(\alpha) := G(\alpha, \alpha)$ satisfies
    $$ \tilde{G}'(\alpha) = -\frac{(2\sin 2\alpha + 2(\pi - 2\alpha))}{\pi (\sin \alpha)^2}<0$$
    for $\alpha \in (0,\frac{\pi}{2})$. So $\alpha \mapsto \tilde{G}(\alpha)$ is monotone decreasing for $\alpha \in (0,\frac{\pi}{2})$ and the level set
    $$ \mathcal{G}_c= \{p \in T : G(p) = c\}$$
    can only intersect the line $\alpha = \beta$ at most once. 
    So the only possibility is for $\mathcal{G}_c\cap \tilde{T}$ to be a curve joining the line $\alpha = \beta$ to the line $\alpha = \frac{\pi}{2} - \beta$.
    
    For $\lambda \in \mathbb{R}$, we consider
    $$F(\alpha, \beta)= \lambda Q(\alpha,\beta) + G(\alpha,\beta).$$
    Assume that $F$ has an extremal point in $\mathcal{G}_c \cap \tilde{T}$. 
    Then, by the method of Lagrange multipliers, we must have that
    $$ \nabla F(\alpha, \beta) = (0,0).$$
    That the partial derivative with respect to $\alpha$ is equal to zero gives that
    \begin{equation}\label{eq:lmt}
        -\lambda \pi \cos \alpha = \sin 2\alpha + (\pi -2\alpha).
    \end{equation} 
    We note that for
    $$ r(\theta) = \frac{\sin 2\theta + (\pi -2\theta)}{\cos \theta},$$
    we have
    $$ r'(\theta) = \frac{\sin \theta (\pi - 2\theta - \sin (2\theta))}{(\cos\theta)^2} >0$$
    for $\theta \in (0,\frac{\pi}{2})$.
    So \eqref{eq:lmt} can have at most one solution.
    As the extremal point lies in $\mathcal{G}_c \cap \tilde{T}$, we would require two distinct solutions.
    We conclude that $Q$ is monotone on $\mathcal{G}_c\cap \tilde{T}$ as required.
\end{proof}

\begin{remark}
    We remark that in the case of Dirichlet boundary conditions, the Dirichlet spectrum determines the height via the wave trace and its relationship with the shortest closed geodesic (see \cite{LR15} and references therein). As we consider the Open heat content, we do not have additional spectral information from which we can glean the height.
\end{remark}

The following example shows that there exists a pair of acute trapezia with different heights that have the same first three terms in the small-time asymptotic expansion for the Open heat content. So it is necessary to prescribe the height to be able to distinguish such trapezia from one another.

\begin{example}
Let $Z$ be an acute trapezium with base angles $\alpha, \beta$. Let $x:=\cot \alpha + \cot \beta$ and $y := \csc \alpha + \csc \beta$. Note that $x$ and $y$ are just constants. By elementary geometry, we have that
    \begin{align}
        2b+hx+hy &= L(\partial Z), \label{eq:eqn1} \\
        h(2b+hx) &= 2|Z|. \label{eq:eqn2}
    \end{align}
    From \eqref{eq:eqn1} we get $2b = L(\partial Z)-hx-hy$, and substituting that into \eqref{eq:eqn2}, we get
    \[h(L(\partial Z)-hy)=2|Z|.\]
    This is a quadratic equation in $h$ with solutions
    \[h=\frac{L(\partial Z) \pm \sqrt{L(\partial Z)^2-8|Z|y}}{2y}.\] 
    Since we require that $b > 0$, some combinations of $(|Z|,L(\partial Z))$ will only give us one $h$ that is valid. However, for some other combinations, both $h$ will be valid and they lead to two non-congruent acute trapezoids. 
    For example, consider an acute trapezium with base angles $\alpha = \frac{\pi}{6} = \beta$,
    base of length $4\sqrt{3}+1$ where the other side-lengths are $4, 1, 4$ and the height is $h=2$. This trapezium has the same first three terms in the small-time Open heat content asymptotic expansion as a trapezium with the same base angles, base of length $\frac{\sqrt{3}}{2}+7$ where the other side-lengths are $1+2\sqrt{3}, 1-\frac{\sqrt{3}}{2}, 1+2\sqrt{3}$ and the height is $h = \frac{1}{2}(1+2\sqrt{3})>2$.
\end{example}

\begin{remark}
    The rectangle is distinguished within the collection of all trapezia.
    Consider a trapezium with internal angles $0<\beta \leq \alpha < \pi$, $\pi-\alpha$ and $\pi-\beta$, then by \eqref{eq:Open-poly-coeffs}, the term of order $t$ in the small-time asymptotic expansion of Open heat content is
    \begin{equation*}
        \frac{4}{\pi} + \left(1 - \frac{2\alpha}{\pi}\right) \cot \alpha + \left(1 - \frac{2\beta}{\pi}\right) \cot \beta := \frac{4}{\pi} + f(\alpha) + f(\beta) := F(\alpha, \beta),
    \end{equation*}
    where $f(x) = \left(1 - \frac{2x}{\pi}\right) \cot x$.
    We observe that $f(\pi -x) = f(x)$ and $f$ is strictly monotone decreasing for $x \in \left( 0, \frac{\pi}{2} \right)$, hence $f$ is strictly monotone increasing for $x \in \left(\frac{\pi}{2}, \pi \right)$ . We also have that $f(x) = 0 \iff x = \frac{\pi}{2}$. Hence, we conclude that $F(\alpha, \beta) \geq \frac{4}{\pi}$, and that equality only holds when $\alpha = \beta = \frac{\pi}{2}$.
    The first two terms of \eqref{eq:Open-poly-coeffs} give us the area and perimeter from which we can uniquely determine the rectangle.
\end{remark}

Note that it is possible to find a parallelogram and a trapezium that have the same first three terms in the small-time asymptotic expansion of the Open heat content. 
For example, we could consider the trapezium with $B=3$, $b=1$, and $\alpha = \beta = \frac{\pi}{4}$ and the parallelogram with $\alpha = \frac{\pi}{4}$, $L=2$, $W = \sqrt{2}$, see Figure \ref{fig:trappar}.

\begin{figure}[h]
    \centering
    \begin{tikzpicture}[my angle/.style={font=\scriptsize, draw, angle eccentricity=2, angle radius=3mm}]
  \node (a) [trapezium, trapezium angle=45, minimum width=50mm, draw, thick, label=above:1, label=below:3, label=right:$\sqrt{2}$, label=left:$\sqrt{2}$] {};
  \draw [densely dashed] (a.north west) coordinate (a nw) -- (a nw |- a.south) node [midway,right] {$1$} coordinate (a1) (a.north east) coordinate (a ne) -- (a ne |- a.south) node [midway,left] {$1$} coordinate (a2);
  \draw (a nw |- a.south) ++(0,1.5mm) -| ++(-1.5mm,-1.5mm) (a ne |- a.south) ++(0,1.5mm) -| ++(1.5mm,-1.5mm);
  \coordinate (a blc) at (a.bottom left corner);
  \coordinate (a brc) at (a.bottom right corner);
  \pic [my angle, "$\frac{\pi}{4}$"] {angle=a1--a blc--a nw};
  \pic [my angle, "$\frac{\pi}{4}$"] {angle=a ne--a brc--a1};
  \pic [my angle, "$\frac{\pi}{4}$"] {angle=a blc--a nw--a1};
  \pic [my angle, "$\frac{\pi}{4}$"] {angle=a2--a ne--a brc};
    \end{tikzpicture}
    \hspace{2cm}
    \begin{tikzpicture}[my angle/.style={font=\scriptsize, draw, angle eccentricity=2, angle radius=3mm}]
      \node (a) [trapezium, trapezium left angle=45, trapezium right angle=135, minimum width=50mm,
      draw, thick, label=below:$2$, label=left:$\sqrt{2}$, label=above:$2$, label=right:$\sqrt{2}$] {};
        \draw [densely dashed] (a.north west) coordinate (a nw) -- (a nw |- a.south) node [midway,right] {$1$} coordinate (a1);
        \draw (a nw |- a.south) ++(0,1.5mm) -| ++(-1.5mm,-1.5mm);
          \coordinate (a blc) at (a.bottom left corner);
          \coordinate (a brc) at (a.bottom right corner);
           \pic [my angle, "$\frac{\pi}{4}$"] {angle=a1--a blc--a nw};       
           \pic [my angle, "$\frac{\pi}{4}$"] {angle=a blc--a nw--a1};
    \end{tikzpicture}
    \caption{A trapezium and a parallelogram that have the same first three terms of the small-time asymptotic expansion for the Open heat content.}
    \label{fig:trappar}
\end{figure}

\section{The continuity of heat content coefficients}\label{sec:continuity}

We first give a general statement related to the $a_{1,Q}(\cdot)$ and $a_{1,H}(\cdot)$ coefficients (i.e., the coefficients of $t$ in the small-time asymptotic expansions for the heat contents). Consider $\Omega \subset \mathbb{R}^2$, a bounded, convex, open set with polygonal boundary, and a quantity of the form
$$a_1(\Omega) = \sum_{i=1}^N f(\gamma_i)$$
where $f\colon (0, \pi] \to \mathbb{R}$ is a function and $A = \{\gamma_1, \dots, \gamma_N\}$ is the set of interior angles of $\Omega$ with each $\gamma_i \in (0,\pi)$. Define
$$\mathcal{C} := \{f \colon (0,\pi] \to \mathbb{R} \colon f(\pi) = 0, f \ \text{is differentiable and convex}\}.$$

\begin{theorem}\label{thrm:general-bounds}
    Let $\Omega$ be a bounded, convex, open set with polygonal boundary and let $f \in \mathcal{C}$. Let $\beta := \min A$. Then 
        $$a_1(\Omega) \leq \frac{2\pi f(\beta)}{\pi - \beta}.$$
    Moreover,
        $$a_1(\Omega) \geq -2\pi f'(\pi),$$
    where equality holds if and only if $f$ is affine on $[\beta, \pi]$.
    In particular, if $f$ is strictly convex near $\pi$, then $a_1(\Omega) > -2\pi f'(\pi)$.
\end{theorem}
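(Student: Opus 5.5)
The only geometric input is the angle-sum identity for a convex $N$-gon, $\sum_{i=1}^N(\pi-\gamma_i)=2\pi$. Writing $\theta_i:=\pi-\gamma_i\in(0,\pi)$ for the exterior angles, we have $\sum_i\theta_i=2\pi$. Everything else is one-variable convexity of $f$ on $[\beta,\pi]$, where $\beta=\min A$, using $f(\pi)=0$.

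For the upper bound I will use the chord inequality. Each $\gamma_i$ lies in $[\beta,\pi]$, so convexity of $f$ gives
$$f(\gamma_i)\le \frac{\pi-\gamma_i}{\pi-\beta}f(\beta)+\frac{\gamma_i-\beta}{\pi-\beta}f(\pi)=\frac{\theta_i}{\pi-\beta}f(\beta).$$
Summing over $i$ and using $\sum_i\theta_i=2\pi$ gives $a_1(\Omega)\le 2\pi f(\beta)/(\pi-\beta)$. This step needs no sign assumption on $f(\beta)$, because the weights $\theta_i/(\pi-\beta)$ are nonnegative and the inequality is applied termwise.

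For the lower bound I will use the tangent-line inequality at $\pi$. Differentiability of $f$ at $\pi$ is part of the definition of $\mathcal{C}$; if this is read as a one-sided (left) derivative, the supporting-line inequality still holds for a convex function on $(0,\pi]$. It gives
$$f(\gamma_i)\ge f(\pi)+f'(\pi)(\gamma_i-\pi)=-f'(\pi)\,\theta_i.$$
Summing yields $a_1(\Omega)\ge -2\pi f'(\pi)$.

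The main obstacle is the equality characterisation. Equality forces $f(\gamma_i)=-f'(\pi)\theta_i$ for every $i$, that is, $f$ agrees with its tangent line at $\pi$ at the point $\gamma_i$. Since $f$ is convex and lies above its tangent line, the set where $f$ equals that line is an interval containing $\pi$. Because $\beta$ is one of the $\gamma_i$, this interval contains $[\beta,\pi]$, so $f$ is affine there. Conversely, if $f$ is affine on $[\beta,\pi]$, then $f$ coincides with its tangent line at $\pi$ on that interval, and every term gives equality. For the final claim, strict convexity of $f$ near $\pi$ rules out $f$ being affine on any nondegenerate interval $[\beta,\pi]$ with $\beta<\pi$, so the inequality is strict.
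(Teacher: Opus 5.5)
Your proof is correct and follows essentially the same route as the paper: the chord from $\beta$ to $\pi$ gives the upper bound, the tangent line at $\pi$ gives the lower bound, and both are summed using $\sum_i(\pi-\gamma_i)=2\pi$. Your treatment of the equality case is more explicit than the paper's, which only records the termwise condition: you note that the termwise gaps are nonnegative, and that the contact set of $f$ with its supporting line at $\pi$ is an interval containing both $\pi$ and $\beta$.
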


Before we prove Theorem \ref{thrm:general-bounds}, we make the following observation about the upper bound it provides.

\begin{lemma}
    Let $f \in \mathcal{C}$. Then the function 
    $$h(\gamma) :=  
    \begin{cases}
        \frac{2\pi f(\gamma)}{\pi-\gamma}, &\gamma \in (0,\pi), \\
        -2\pi f'(\pi), &\gamma =\pi,
    \end{cases}$$ is non-increasing on $(0, \pi]$.
    \label{lemma:upper-bound-decreasing}
\end{lemma}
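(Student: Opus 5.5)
We will show that $h$ is non-increasing on $(0,\pi)$ and then check the endpoint $\gamma=\pi$ separately. The key observation is that on $(0,\pi)$ we can write
\[
h(\gamma) = -2\pi\,\frac{f(\pi)-f(\gamma)}{\pi-\gamma},
\]
because $f(\pi)=0$. So $h(\gamma)$ is $-2\pi$ times the slope of the chord of $f$ joining $(\gamma,f(\gamma))$ and $(\pi,f(\pi))$.

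The argument rests on the standard three-slope (chord-slope) property of convex functions. If $f$ is convex on $(0,\pi]$, then for a fixed right endpoint $\pi$ the chord slope
\[
\gamma \mapsto \frac{f(\pi)-f(\gamma)}{\pi-\gamma}
\]
is non-decreasing in $\gamma\in(0,\pi)$. To verify this, take $\gamma_1<\gamma_2<\pi$ and write $\gamma_2=\lambda\gamma_1+(1-\lambda)\pi$ with $\lambda=(\pi-\gamma_2)/(\pi-\gamma_1)\in(0,1)$. Convexity gives $f(\gamma_2)\le \lambda f(\gamma_1)+(1-\lambda)f(\pi)$. Rearranging yields
\[
\frac{f(\pi)-f(\gamma_1)}{\pi-\gamma_1}\le \frac{f(\pi)-f(\gamma_2)}{\pi-\gamma_2}.
\]
Multiplying by $-2\pi<0$ reverses the inequality, so $h(\gamma_1)\ge h(\gamma_2)$. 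Hence $h$ is non-increasing on $(0,\pi)$.

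For the endpoint, we use differentiability of $f$ at $\pi$, understood as the left derivative since $\pi$ is the endpoint of the domain. By definition,
\[
\lim_{\gamma\uparrow\pi}\frac{f(\pi)-f(\gamma)}{\pi-\gamma}=f'(\pi),
\]
so $h(\gamma)\to -2\pi f'(\pi)=h(\pi)$ as $\gamma\uparrow\pi$. The chord slopes are non-decreasing and converge to $f'(\pi)$, so each satisfies $\frac{f(\pi)-f(\gamma)}{\pi-\gamma}\le f'(\pi)$. Therefore $h(\gamma)\ge h(\pi)$ for all $\gamma\in(0,\pi)$, and $h$ is non-increasing on all of $(0,\pi]$.

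None of these steps is a real obstacle. The only point requiring care is the endpoint: we must interpret $f'(\pi)$ as a one-sided derivative and confirm that the monotone limit of the chord slopes equals it, which is immediate from the definition of the derivative.
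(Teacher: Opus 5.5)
Your proof is correct and follows essentially the same route as the paper: it uses convexity along the chord from $\gamma$ to $\pi$ with weight $(\pi-\alpha)/(\pi-\gamma)$ and $f(\pi)=0$ to get $f(\alpha)/(\pi-\alpha)\le f(\gamma)/(\pi-\gamma)$. Your endpoint step is the more careful of the two: the chord slopes increase to the left derivative $f'(\pi)$, so $h(\gamma)\ge h(\pi)$, which makes explicit what the paper only asserts as ``define $h$ at $\gamma=\pi$ by continuity.''
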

\begin{proof}
    Let $\gamma \leq \alpha$. By convexity, using the straight line from $\gamma$ to $\pi$, we have for $t \in  [0,1]$, that
    $$f(t\gamma + (1-t)\pi) \leq tf(\gamma) + (1-t)f(\pi) = tf(\gamma).$$
    There exists a $t_0 \in [0,1]$ such that $t_0\gamma + (1-t_0)\pi = \alpha \Leftrightarrow t_0 = \frac{\pi - \alpha}{\pi - \gamma}$. Therefore we have
    $$f(\alpha) \leq \left(\frac{\pi - \alpha}{\pi - \gamma}\right)f(\gamma) \Longleftrightarrow \frac{f(\alpha)}{\pi - \alpha} \leq \frac{f(\gamma)}{\pi - \gamma} \Longleftrightarrow h(\alpha) \leq h(\gamma).$$
    Using that $f(\pi)=0$, we define $h$ at $\gamma = \pi$ by continuity.
\end{proof}

The idea to prove Theorem \ref{thrm:general-bounds} is to work inside the sum first by exploiting the convexity of $f$. We estimate above using a straight line segment from $\beta$ to $\pi$ and below by the tangent line at $\pi$, and then take the sum. 

\begin{proof}
    Let $\alpha \in [0,\pi]$. By convexity of $f$, using the tangent line of $f$ at $\pi$, and that $f(\pi)=0$ we have
    \begin{equation}\label{eqn:tangent-line-bound}
    f(\alpha) \geq f(\pi) + f'(\pi)(\alpha - \pi) = f'(\pi)(\alpha - \pi).
    \end{equation}
    Furthermore, equality holds if and only if $f$ is affine on $[\alpha, \pi]$.

    Let $\beta = \min A \in (0,\pi)$. Again by convexity of $f$, using the straight line segment from $\beta$ to $\pi$, and that $f(\pi) = 0$, we have for $t \in [0,1]$,
    $$f(t \beta + (1-t)\pi) \leq t f(\beta) + (1-t) f(\pi) = t f(\beta).$$
    We further have each $\gamma_i \in [\beta,\pi)$, that is the estimate holds for all $\gamma_i$, and hence we can write
    $$t \beta + (1-t) \pi = \gamma_i \Leftrightarrow t = \frac{\pi - \gamma_i}{\pi - \beta}.$$
    Therefore we have
    \begin{equation}
    f(\gamma_i) \leq \left(\frac{\pi - \gamma_i}{\pi - \beta}\right)f(\beta).
    \label{eqn:straight-line-bound}
    \end{equation}
    Now we notice
    $$\sum_{i=1}^N (\pi - \gamma_i) = N\pi - (N - 2) \pi = 2\pi.$$
    Hence, by \eqref{eqn:tangent-line-bound} and \eqref{eqn:straight-line-bound}, we have
    $$\sum_{i=1}^N f'(\pi)(\gamma_i - \pi) \leq \sum_{i=1}^N f(\gamma_i)\leq \sum_{i=1}^N \left(\frac{\pi - \gamma_i}{\pi - \beta}\right)f(\beta)$$
    which, after evaluating the sums, becomes
    $$ -2 \pi f'(\pi) \leq a_1(\Omega)\leq \frac{2 \pi f(\beta)}{\pi - \beta}.$$
\end{proof}

\begin{remark}
    We note that Theorem \ref{thrm:general-bounds} does not require that $f$ is decreasing. However, if $f$ were non-negative, then convexity would imply that $f$ is decreasing (and hence $-2 \pi f'(\pi) \geq 0$  as expected).
\end{remark}

Next we consider approximations of convex sets by convex polygons. Central to our proof is the fact that as we approximate the smooth domain by polygons, the interior angles of the polygons will tend to $\pi$ and so the corresponding straight line segments will approach the tangent lines.

\begin{corollary}[Convergence for polygons to smooth]\label{cor:general-conv}
    Suppose $(\Omega_k)_{k \geq 1}$ is a sequence of bounded, convex, open sets with polygonal boundary, $\Omega$ is a bounded, convex, open set with $C^\infty$ boundary and $\Omega_k \overset{H}{\longrightarrow} \Omega$. Then for $f \in \mathcal{C}$ we have
        $$\lim_{k \to \infty}a_1(\Omega_k) = -2\pi f'(\pi).$$
    In particular, if $a_1(\Omega) = -2 \pi f'(\pi)$, then $\lim_{k \to \infty} a_1(\Omega_k) = a_1(\Omega)$.
\end{corollary}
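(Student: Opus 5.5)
The plan is to squeeze $a_1(\Omega_k)$ between the two bounds of Theorem \ref{thrm:general-bounds}. The lower bound $a_1(\Omega_k)\geq -2\pi f'(\pi)$ holds for every $k$ with no further work. For the upper bound, let $\beta_k := \min A_k$ be the smallest interior angle of $\Omega_k$. Theorem \ref{thrm:general-bounds} gives $a_1(\Omega_k)\leq h(\beta_k)$, where $h$ is the function of Lemma \ref{lemma:upper-bound-decreasing}. Since $f$ is differentiable at $\pi$ and $f(\pi)=0$, we have
\[
h(\gamma)=\frac{2\pi f(\gamma)}{\pi-\gamma}=-2\pi\,\frac{f(\gamma)-f(\pi)}{\gamma-\pi}\to -2\pi f'(\pi)\quad\text{as } \gamma\uparrow\pi,
\]
so $h$ is continuous at $\pi$. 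It therefore suffices to show that $\beta_k\to\pi$, and the conclusion follows by the squeeze. The final sentence of the corollary is then immediate.

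The main step, and the only real obstacle, is the geometric claim that the minimal interior angle of $\Omega_k$ tends to $\pi$ when $\Omega_k\overset{H}{\to}\Omega$ and $\partial\Omega$ is $C^\infty$ (in particular $C^1$, so that $\Omega$ has no corners). I would argue by contradiction. Suppose there are $\delta>0$ and a subsequence with a vertex $v_k$ of $\Omega_k$ whose interior angle is at most $\pi-\delta$. By Hausdorff convergence and boundedness, we may pass to a further subsequence with $v_k\to v$. The limit $v$ lies in $\partial\Omega$: boundary points of convex sets converge to boundary points under Hausdorff convergence of convex bodies, since an interior point of $\Omega$ lies, for large $k$, well inside $\Omega_k$.

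At the vertex $v_k$, $\Omega_k$ is contained in a cone $C_k$ with apex $v_k$ and opening angle at most $\pi-\delta$. Passing to a subsequence, the two edge directions of $C_k$ converge, so $C_k$ converges (locally in the Hausdorff sense) to a closed cone $C$ with apex $v$ and opening at most $\pi-\delta$. Since $\Omega_k\subset C_k$ for all $k$, passing to the limit gives $\overline{\Omega}\subset C$. But $\partial\Omega$ is $C^1$ at $v$, so $\Omega$ has a unique supporting line at $v$. Near $v$, $\Omega$ contains points on both sides approaching that tangent line with angle tending to $\pi$; more precisely, $\Omega$ contains a region whose tangent cone at $v$ is a full half-plane. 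A half-plane cannot be contained in a cone of opening at most $\pi-\delta<\pi$, so we have a contradiction. Hence $\beta_k\to\pi$.

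The care needed is entirely in this last step: making precise that the limiting cone contains $\overline\Omega$ and that the tangent cone of a $C^1$ convex set at a boundary point is a half-plane. Once $\beta_k\to\pi$ is established, the rest is a direct application of Theorem \ref{thrm:general-bounds} and the continuity of $h$ at $\pi$ from Lemma \ref{lemma:upper-bound-decreasing}.
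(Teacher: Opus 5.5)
Your argument is correct and is essentially the paper's proof: you squeeze $a_1(\Omega_k)$ between the two bounds of Theorem \ref{thrm:general-bounds}, rewrite the upper bound as a difference quotient of $f$ at $\pi$, and use $\beta_k=\min A_k\to\pi$. The only difference is that the paper gets $\beta_k\to\pi$ by citing \cite[Lemma 4.7]{MR24}, whereas you prove it directly (a limiting wedge containing $\overline{\Omega}$ contradicts the unique supporting line at a $C^1$ boundary point). Your version has the small advantage of controlling the \emph{minimum} angle uniformly over the varying number of vertices, which is exactly what the squeeze needs.
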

\begin{proof}
    Let $N_k$ be the number of interior angles of $\Omega_k$ and write $A_k = \{\gamma_{k,i}\}_{i=1}^{N_k}$ for the set of interior angles of $\Omega_k$. By \cite[Lemma 4.7]{MR24} we have that $\gamma_{k,i} \overset{k \to \infty}{\longrightarrow} \pi$ for all $i$. We therefore also have that $\beta_k = \min A_k \overset{k \to \infty}{\longrightarrow} \pi$.
    
    Using $f(\pi) = 0$, we rewrite the upper bound from Theorem \ref{thrm:general-bounds} as follows
    $$a_1(\Omega_k) \leq -2\pi \left(\frac{f(\beta_k)-f(\pi)}{\beta_k - \pi}\right) \overset{k \to \infty}{\longrightarrow} -2\pi f'(\pi).$$
    As the limit of the right-hand side agrees with the lower bound from Theorem \ref{thrm:general-bounds}, we conclude the limit as claimed.
\end{proof}

Now we consider approximations of convex polygons by convex sets. Note here we require the additional constraint that $f$ is strictly convex at $\pi$.

\begin{corollary}[Non-convergence for smooth to polygons]\label{cor:general-non-conv}
    Suppose $(\Omega_k)_{k \geq 1}$ is a sequence of bounded, convex, open sets with $C^\infty$ boundary, $\Omega$ is a bounded, convex, open set with polygonal boundary, and $\Omega_k \overset{H}{\longrightarrow} \Omega$. Let $f \in \mathcal{C}$ and further assume $f$ is strictly convex on a left neighbourhood of $\pi$. Then if $a_1(\Omega_k) = -2 \pi f'(\pi)$, we have $\lim_{k \to \infty} a_1(\Omega_k) < a_1(\Omega)$.
\end{corollary}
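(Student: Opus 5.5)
The plan is to show that the smooth sets contribute the constant value $-2\pi f'(\pi)$ along the whole sequence, and that the polygon lies strictly above this value by Theorem~\ref{thrm:general-bounds}.

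First, by hypothesis $a_1(\Omega_k) = -2\pi f'(\pi)$ for every $k$. So the sequence is constant, and
$$\lim_{k\to\infty} a_1(\Omega_k) = -2\pi f'(\pi).$$
The Hausdorff convergence plays no further role beyond fixing the limit set $\Omega$. The whole task therefore reduces to proving the strict inequality $a_1(\Omega) > -2\pi f'(\pi)$ for the polygon $\Omega$.

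Second, apply Theorem~\ref{thrm:general-bounds} to $\Omega$. Let its interior angles be $\gamma_1,\dots,\gamma_N \in (0,\pi)$ and set $\beta = \min A$. The theorem gives $a_1(\Omega) \ge -2\pi f'(\pi)$, with equality if and only if $f$ is affine on $[\beta,\pi]$. Since $\beta<\pi$, the interval $[\beta,\pi]$ contains a left neighbourhood $(\pi-\delta,\pi]$ of $\pi$, namely with $\delta = \min\{\pi-\beta,\delta_0\}$, where $(\pi-\delta_0,\pi]$ is the neighbourhood on which $f$ is strictly convex by assumption. A function that is strictly convex on a nondegenerate interval cannot be affine there. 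Hence $f$ is not affine on $[\beta,\pi]$, equality is excluded, and $a_1(\Omega) > -2\pi f'(\pi) = \lim_k a_1(\Omega_k)$.

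The only delicate point is the equality characterisation in Theorem~\ref{thrm:general-bounds}, which we use in the direction "not affine implies strict inequality". I would make this explicit at the level of a single angle. The tangent-line bound \eqref{eqn:tangent-line-bound} at the vertex with $\gamma_i=\beta$ is strict whenever $f$ is not affine on $[\beta,\pi]$: if $f(\beta) = f'(\pi)(\beta-\pi)$, then by convexity $f$ lies between its chord on $[\beta,\pi]$ and the tangent line at $\pi$, and both of these are the same line, forcing $f$ to be affine there. The inequalities at the other vertices are non-strict, so summing the per-angle bounds and using $\sum_i(\pi-\gamma_i)=2\pi$ yields the strict inequality for $a_1(\Omega)$. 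Differentiability of $f$ at $\pi$, needed for $f'(\pi)$ to make sense, is part of the definition of $\mathcal{C}$.
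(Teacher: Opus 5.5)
Your proposal is correct and follows the paper's proof. The hypothesis makes $a_1(\Omega_k)$ constantly equal to $-2\pi f'(\pi)$, and the strict form of the lower bound in Theorem~\ref{thrm:general-bounds} (equality only when $f$ is affine on $[\beta,\pi]$, which strict convexity near $\pi$ rules out) gives $a_1(\Omega) > -2\pi f'(\pi)$. Your extra check, that equality at the vertex with angle $\beta$ forces the chord on $[\beta,\pi]$ to coincide with the tangent line at $\pi$, makes explicit an equality case that the paper only asserts.
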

\begin{proof}
    Since $f$ is strictly convex at $\pi$, by Theorem \ref{thrm:general-bounds} we have that the lower bound is strict and hence taking limits we get $\lim_{k \to \infty} a_1(\Omega_k) = - 2\pi f'(\pi) < a_1(\Omega)$.
\end{proof}

We prove Theorem \ref{thm:conv} and \ref{thm:nonconv} for the Open heat content first.

\begin{proof}[Proof of Theorem \ref{thm:conv} and \ref{thm:nonconv} for the Open heat content]
    For this case we are working with the coefficients in \eqref{eq:Open-smooth-coefficients}, \eqref{eq:Open-poly-coeffs}, and \eqref{eq:Open-angle}. By Corollaries \ref{cor:general-conv} and \ref{cor:general-non-conv} it is enough to verify that $g \in \mathcal{C}$, $g$ is strictly convex at $\pi$, and that $-2 \pi g'(\pi) = 0$.  We compute that $g(\pi) = 0$ and $g''(\pi) = 2/3\pi > 0$. 
    So we have that $g \in \mathcal{C}$ and $g$ is strictly convex at $\pi$. Finally we have $g'(\pi) = 0$ by applying L'Hôpital's rule twice, hence the result follows.
\end{proof}

The idea to deal with the Dirichlet case is to push the limits inside the integral. We apply Theorem \ref{thrm:general-bounds} to the function under the integral. From here we consider the upper bound, to which we can apply dominated convergence by the observation of Lemma \ref{lemma:upper-bound-decreasing}. This allows us to move the limit inside the integral and take the limit on the level of functions instead.

\begin{proof}[Proof of Theorem \ref{thm:conv} and \ref{thm:nonconv} for the Dirichlet heat content]
    For the Dirichlet case we are considering the coefficients in \eqref{eq:Dir-smooth-coeffs}, \eqref{eq:Dir-poly-coeffs} and \eqref{eq:Dir-angle}.
    
    For $\alpha \in [0,\pi]$, we define
    $$g_\theta(\alpha) := \frac{\sinh((\pi - \alpha)\theta)}{\cosh(\alpha\theta)}$$
    and write
    $$b_0(\Omega_k) := \sum_{i=1}^{N_k} g_\theta(\gamma_{k,i}).$$
    The function $g_\theta$ is twice differentiable and taking derivatives yields
    \begin{align*}
    g_\theta'(\alpha) &= \frac{-\theta\cosh(\pi\theta)}{\cosh^2(\alpha\theta)}, \\
    g_\theta''(\alpha) &= \frac{2\theta^2\cosh(\pi\theta)\sinh{(\alpha \theta)}}{\cosh^3(\alpha\theta)} > 0 \quad \forall \theta \in (0, \infty), \forall\alpha \in (0,\pi].
    \end{align*}
    Also $g_\theta(\pi) = 0$ and 
    $$g_\theta'(\pi) = \frac{-\theta}{\cosh(\pi\theta)}.$$ 
    We deduce that $g_\theta \in \mathcal{C}$ and is strictly convex at $\alpha = \pi$. So, by Theorem \ref{thrm:general-bounds}, we have
    $$-2 \pi g_\theta'(\pi) < b_0(\Omega_k) \leq \frac{2\pi g_\theta(\beta_k)}{\pi - \beta_k},$$
    where $\beta_k = \min A_k$, and $\lim_{k \to \infty}b_0(\Omega_k) = -2 \pi g_\theta'(\pi)$ by Corollary \ref{cor:general-conv}.
    Now since
    \begin{align*}
    a_1(\Omega_k) &= \sum_{i=1}^{N_k} \int_0^\infty \frac{4}{\sinh(\pi\theta)} g_\theta(\gamma_{k,i})d\theta\\ &= \int_0^\infty \frac{4}{\sinh(\pi\theta)} \sum_{i=1}^{N_k} g_\theta(\gamma_{k,i})d\theta
    = \int_0^\infty \frac{4}{\sinh(\pi\theta)} b_0(\Omega_k) d\theta,
    \end{align*}
    we have
    $$\int_0^\infty \frac{8\pi\theta}{\sinh(\pi\theta)\cosh(\pi\theta)}d\theta < a_1(\Omega_k) \leq \int_0^\infty \frac{4}{\sinh(\pi\theta)} \frac{2\pi g_\theta(\beta_k)}{\pi - \beta_k} d\theta.$$
    We now wish to take the limit as $k \to \infty$. 
    By Lemma \ref{lemma:upper-bound-decreasing}, the function $\alpha \mapsto 2\pi g_\theta(\alpha)/(\pi - \alpha)$ is decreasing on $[0,\pi)$.
    In addition, we know as before that $\beta_k \overset{k \to \infty}{\longrightarrow} \pi$, and so there exists an $N \in \mathbb{N}$ such that $|\beta_k - \pi| < \pi/2 \Leftrightarrow \beta_k > \pi/2$ for $k > N$.
    Therefore, for $k > N$, we have 
    $$\frac{2 \pi g_\theta(\beta_k)}{\pi - \beta_k} \leq \frac{2\pi g_\theta(\pi/2)}{\pi - \pi/2} = 4 g_\theta(\pi/2).$$ 
    We also have that
    \begin{align*}
    \int_0^\infty \frac{4}{\sinh(\pi\theta)}(4g_\theta(\pi/2))d\theta &= 16\int_0^\infty \frac{1}{2\sinh(\pi\theta/2)\cosh(\pi\theta/2)}\frac{\sinh(\pi\theta/2)}{\cosh(\pi\theta/2)}d\theta \\
    &= 8 \int_0^\infty \frac{1}{\cosh^2(\pi\theta/2)}d\theta < \infty.
    \end{align*}
    For $k > N$, we thus conclude that 
    $$h_k\colon \theta \mapsto \frac{8\pi g_\theta(\beta_k)}{(\pi - \beta_k)\sinh(\pi \theta)}$$ 
    is a sequence of integrable functions dominated by the integrable function $\theta \mapsto 8 (\sech (\pi \theta/2))^2$, and hence by the dominated convergence theorem,
    \begin{align*}
    \lim_{k \to \infty} \int_0^\infty \frac{4}{\sinh(\pi\theta)} \frac{2\pi g_\theta(\beta_k)}{\pi - \beta_k} d\theta &= \int_0^\infty \frac{4}{\sinh(\pi\theta)} \lim_{k \to \infty}\left(\frac{2\pi g_\theta(\beta_k)}{\pi - \beta_k}\right) d\theta\\
    &= \int_0^\infty \frac{4}{\sinh(\pi\theta)}(-2\pi g_\theta'(\pi)) d\theta\\
    &= \int_0^\infty \frac{8\pi\theta}{\sinh(\pi\theta)\cosh(\pi\theta)}d\theta.
    \end{align*}
    Finally
    $$\int_0^\infty \frac{8\pi\theta}{\sinh(\pi\theta)\cosh(\pi\theta)}d\theta = \frac{4}{\pi}\int_0^\infty\frac{\theta}{\sinh(\theta)}d\theta = \pi,$$
    where the last equality follows from \cite[3.521.1]{GR14}. Hence we conclude that the limit is $\pi$ as required.

    On the other hand, for a polygon $P$ with $N$ interior angles denoted $\gamma_i$, we see that
    $$ a_{1,Q}(P) = \int_0^\infty \frac{4}{\sinh(\pi\theta)} \sum_{i=1}^{N} g_\theta(\gamma_i)d\theta > \int_0^\infty \frac{8\pi\theta}{\sinh(\pi\theta) \cosh(\pi \theta)} = \pi.$$
    Hence, for a sequence of smooth, convex domains $\Omega_k \overset{H}{\longrightarrow} P$, we have $\lim_{k\to \infty}a_{1,Q}(\Omega_k) = \pi < a_{1,Q}(P)$.
\end{proof}

\begin{remark}[Heat trace case]
    We can use this approach to recover the results \cite[Theorem 6.4]{MR25} and \cite[Theorem 6.5]{MR25} for the Dirichlet heat trace where the angular coefficient and smooth coefficient are given by \eqref{eq:Trace-angle} and \eqref{eq:Trace-smooth} respectively. We verify the conditions of Corollary \ref{cor:general-conv} and Corollary \ref{cor:general-non-conv}. Indeed $h$ is twice differentiable with
    $$h'(\alpha) = \frac{1}{24\pi}\left(-\frac{\pi^2}{\alpha^2} - 1\right),\quad
    h''(\alpha) = \frac{1}{24\pi}\left(\frac{2\pi^2}{\alpha^3}\right) > 0, \quad \text{for} \ \alpha > 0.$$
    So $h$ is strictly convex on $(0, \pi]$ and $h(\pi) = 0$. Finally $-2 \pi h'(\pi) = 1/6$ as required.
\end{remark}

\subsection*{AI Usage Declaration}
Generative AI (Codex, GPT-6.1 Sol) was used to illuminate a way to manipulate \eqref{eq:K} to obtain \eqref{eq:monotone} and reveal its monotone properties. The argument was checked carefully, simplified, and written in its final form by the authors.




\bibliographystyle{plain}              
\bibliography{References}

\end{document}